
\documentclass[12pt]{article}
\usepackage[a4paper,left=3cm, right=3cm,top=3.5cm, bottom=4.5cm]{geometry}
\usepackage{amsmath,amssymb,amsfonts, marvosym}
\usepackage{epsfig}
\usepackage{tikz}
\usepackage{graphics}
\usepackage{bbm}
\usepackage{bm, bbm}
\usepackage[thmmarks]{ntheorem}

\usetikzlibrary[graphs, arrows, backgrounds, intersections, positioning, fit, petri, calc, shapes, decorations.pathmorphing]
\usetikzlibrary{arrows.meta}
\usepgflibrary{patterns}
\tikzset{node distance=1cm, bend angle=20,
vertex/.style={circle,minimum size=2mm,very thick, draw=black, fill=black, inner sep=0mm}, information text/.style={inner sep=1ex, font=\Large}, help lines/.style={-,color=black, >=stealth', shorten <=.5pt, shorten >=.5pt}, blue help lines/.style={help lines,color=darkblue}, red help lines/.style={help lines, color=darkred}}

\definecolor{darkred}{RGB}{105,0,0}

\makeatletter
\newtheoremstyle{prime}%
 {\item[\hskip\labelsep \theorem@headerfont ##1\ \theorem@separator]}%
{\item[\hskip\labelsep \theorem@headerfont ##1\ ##3' \theorem@separator]}
\makeatother

\makeatletter
\newtheoremstyle{proofof}
{\item[\hskip\labelsep \theorem@headerfont ##1\ \theorem@separator]}%
{\item[\hskip\labelsep \theorem@headerfont ##1\ ##3\theorem@separator]}
\makeatother

\newtheorem{theorem}{Theorem}

\newtheorem{proposition}[theorem]{Proposition}

\newtheorem{corollary}[theorem]{Corollary}

\newtheorem{claim}{Claim}

\newcommand{\ncase}[2]{\smallskip {\bf Case #1\/:} {\it #2}}




\def\pfssp{\hskip 0.5em}
%



\newcommand{\pffbf}[2]{\noindent{\bf Proof~of\,\,#1\,:} \pfssp #2 \qed \smallskip}


\def \qed {\hfill $\Box$}
\def \QD1 {\hfill $\spadesuit$}

\newcommand{\DF}[1]{{\bf #1\/}}

\newcommand{\set}[2]{\{#1 \;|\; #2 \}}
\newcommand{\ems}{\varnothing}
\newcommand{\sm}{\setminus}

\newcommand{\De}{\Delta}
\newcommand{\de}{\delta}

\newcommand{\cn}{\chi}
\newcommand{\lcn}{\chi_{\ell}}
\newcommand{\dcn}{\chi_{\rm DP}}

\newcommand{\f}{\varphi}
\newcommand{\fin}{\varphi^{-1}}

\newcommand{\cB}{{\cal B}}
\newcommand{\cP}{{\cal P}}

\newcommand{\cG}{{\cal G}}
\newcommand{\cDG}{{\bf Dir}}
\newcommand{\cD}{{\cal D}}
\newcommand{\cO}{{\cal O}}

\newcommand{\CR}{{\rm CR}}

\newcommand{\cK}{\mathfrak{C}}

\newcommand{\nat}{\mathbb{N}}
\newcommand{\nato}{\mathbb{N}_0}
\newcommand{\ganz}{\mathbb{Z}}

\newcommand{\dom}{{\rm dom}}
\newcommand{\su}{{\rm sp}}
\newcommand{\suc}{{\rm sp^o}}

\newcommand{\NP}  {{\sf NP}}
\newcommand{\NPC} {{\NP}-complete}

\newcommand{\mdo}[3]{#1 \equiv #2 \, \mathrm{(mod} \; #3 \mathrm{)}}

\newcommand{\vf}{{\bf f}}

\newcommand{\jou}[4]{{\em #1} {\bf #2} (#3) #4.}
\def \JCTB {J. Combin. Theory \, Ser.~B}

\numberwithin{equation}{section}

\theoremstyle {nonumberplain}
\theoremseparator{:}
\theorembodyfont{\normalfont}
\theoremsymbol{\rule{1ex}{1ex}}
\newtheorem{proof}{Proof}

\theoremstyle{proofof}

\theoremsymbol{$\square$}
\newtheorem{proof2}{Proof}

\theoremsymbol{$\diamond$}

\begin{document}

\title{\bf Generalized DP-Colorings of Graphs}

\author{{{Alexandr~V.~Kostochka}\thanks{Research supported by the Simon Visiting Professor}
\thanks{Department of Mathematics, University of Illinois, Urbana, IL 61801, E-mail address:
kostochk@math.uiuc.edu.}}
\and
{{Thomas Schweser
}\thanks{
Technische Universit\"at Ilmenau, Inst. of Math., PF 100565, D-98684 Ilmenau, Germany. E-mail
address: thomas.schweser@tu-ilmenau.de}}
\and
{{Michael Stiebitz}\thanks{
Technische Universit\"at Ilmenau, Inst. of Math., PF 100565, D-98684 Ilmenau, Germany. E-mail
address: michael.stiebitz@tu-ilmenau.de}}
}

\date{}
\maketitle

\begin{abstract}
By a graph we mean a finite undirected graph having multiple edges but no loops. Given a graph property $\cP$, a $\cP$-coloring of a graph $G$ with color set $C$ is a mapping $\f:V(G)\to C$ such that for each color $c\in C$ the subgraph of $G$ induced by the color class $\fin(c)$ belongs to $\cP$. The $\cP$-chromatic number $\cn(G:\cP)$ of $G$ is the least number $k$ for which $G$ admits an $\cP$-coloring with a set of $k$-colors. This coloring concept dates back to the late 1960s and is commonly known as generalized coloring. In the 1980s the $\cP$-choice number $\lcn(G:\cP)$ of $G$ was introduced and investigated by several authors. In 2018 \v{D}vor\'ak and Postle introduced the DP-chromatic number as a natural extension of the choice number. They also remarked that this concept applies to any graph property. This motivated us to investigate the $\cP$-DP-chromatic number $\dcn(G:\cP)$ of $G$. We have $\cn(G:\cP)\leq \lcn(G:\cP)\leq \dcn(G:\cP)$. In this paper we show that various fundamental coloring results, in particular, the theorems of Brooks, of Gallai, and of Erd\H{o}s, Rubin and Taylor, have counterparts for the $\cP$-DP-chromatic number. Furthermore, we provide a generalization of a result from 2000 about partitions of graphs into a fixed number of induced subgraphs with bounded variable degeneracy due to Borodin, Kostochka, and Toft.
\end{abstract}

\noindent{\small{\bf AMS Subject Classification:} 05C15}

\noindent{\small{\bf Keywords:} Generalized coloring of graphs, List-coloring, DP-coloring, Brooks' theorem}

\section{Introduction and main results}
\label{sec:introduction}

Our notation is standard. In particular, $\nat$ denotes the set of positive integers and $\nato=\nat \cup \{0\}$. For integers $k$ and $\ell$, let $[k,\ell]=\set{x\in \ganz}{k \leq x \leq \ell}$. The term \DF{graph} refers to a finite undirected graph possibly with multiple edges but without loops.  For a graph $G$, $V(G)$ and $E(G)$ denote the \DF{vertex set} and the \DF{edge set} of $G$, respectively. The number of vertices of $G$ is called the \DF{order} of $G$ and is denoted by $|G|$. A graph $G$ is called \DF{empty} if $|G|=0$; in this case we also write $G=\ems$. For a vertex $v$ of $G$, let $E_G(v)$ denote the set of edges of $G$ \DF{incident} with $e$. Recall that every edge $e$ of $G$ is incident with exactly two vertices of $G$ which are called the \DF{ends} of $e$. We call $d_G(v)=|E_G(v)|$ the \DF{degree} of $v$ in $G$. Then $\De(G)=\max_v d_G(v)$ is the \DF{maximum degree} of $G$, and $\de(G)=\min_v d_G(v)$ is the \DF{minimum degree} of $G$, where we set $\De(\ems)=\de(\ems)=0$. For two different vertices $u, v$ of $G$, let $E_G(u,v)=E_G(u) \cap E_G(v)$. If $e\in E_G(u,v)$, then we also say that $e$ is an edge of $G$ \DF{joining} $u$ and $v$; and that $u$ is a \DF{neighbour} of $v$ and vice versa. Furthermore, $\mu_G(u,v)=|E_G(u,v)|$ is the \DF{multiplicity} of the vertex pair $u,v$ in $G$; and $\mu(G)=\max_{u\not=v}\mu_G(u,v)$ is the \DF{maximum multiplicity} of $G$. The graph $G$ is said to be \DF{simple} if $\mu(G)\leq 1$.
As usual, we denote by $N_G(v)$ the \DF{neighborhood} of $v$ in $G$, that is, the set of vertices $u$ of $G$ with $E_G(u,v)\not=\ems$. A graph $G$ is called \DF{$k$-degenerate} if each subgraph $H$ of $G$ satisfies $\de(H)\leq k$. For $X,Y \subseteq V(G)$, we denote by $E_G(X,Y)$ the set of edges of $G$ joining a vertex of $X$ with a vertex of $Y$. Furthermore, $G[X]$ is the \DF{subgraph} of $G$ \DF{induced} by $X$, i.e., $V(G[X])=X$ and $E(G[X])=E_G(X,X)$. Define $G-X=G[V(G)\sm X]$, and, for $v\in V(G)$, define $G-v=G-\{v\}$. If $G'$ is a \DF{subgraph} of $G$, we write $G'\subseteq G$, that is, $V(G')\subseteq V(G)$, $E(G')\subseteq E(G)$, and each edge of $G'$ has the same ends in $G'$ as in $G$. If $G'\subseteq G$ and $G'\not=G$, then $G'$ is a \DF{proper subgraph} of $G$.
A vertex set $I\subseteq V(G)$ is \DF{independent} in $G$ if $G[I]$ has no edges. A matching of a graph $G$ is a set $M$ of edges of $G$ with no common ends; the matching $M$ is called \DF{perfect} if $|M|=\frac{|G|}{2}$, or equivalently, if every vertex of $G$ is an end of exactly one edge of $M$. A \DF{separating vertex} of a connected graph $G$ is a vertex $v \in V(G)$ such that $G-v$ has at least two components. The separating vertices of a disconnected graph are defined to be those of its components. We denote by $S(G)$ the set of separating vertices of $G$. Furthermore, a \DF{block} of $G$ is a maximal connected subgraph $B$ of $G$ such that $S(B)=\ems$. Note that each block of $G$ is an induced subgraph of $G$. If $\cB(G)=\{G\}$, we also say that $G$ is a block. We denote by $K_n$ the \DF{complete graph} of order $n\geq 0$ and by $C_n$ the \DF{cycle} of order $n\geq 3$. A cycle is said to be \DF{even} or \DF{odd} depending on whether its order is even or odd.
Clearly, both $K_n$ with $n\geq 1$ and $C_n$ with $n\geq 3$ are blocks and simple graphs. For a graph $G$, we denote by $G^o$ the \DF{underlying simple graph} of $G$, that is, $G^o$ is a simple graph with $V(G^o)=V(G)$ and $E(G^o)=\set{uv}{u, v\in V(G), \mu_G(u,v)>0}$. Note that $G$ and $G^o$ have the same block structure, that is, for every $X\subseteq V(G)$ we have $G[X]\in \cB(G)$ if and only if $G^o[X]\in \cB(G^o)$.

Given a graph $G$, a \DF{coloring} of $G$ with \DF{color set} $C$ is a mapping $\f:V(G)\to C$. Then, the sets $\fin(c)=\set{v\in V(G)}{\f(v)=c}$ with $c\in C$ are called \DF{color classes} of the coloring $\f$. A \DF{list assignment} of $G$ with color set $C$ is a mapping $L:V \to 2^C$ that assigns to each vertex $v\in V$ a set (list) $L(v)\subseteq C$ of colors. A coloring $\f$ of $G$ is called an \DF{$L$-coloring} if $\f(v)\in L(v)$ for all $v\in V$. A \DF{cover} of $G$ is a pair $(X,H)$ consisting of a map $X$ and a graph $H$ satisfying the following two conditions:

\begin{description}
\item[(C1)] $X: V \to 2^{V(H)}$ is a function that assigns to each vertex $v \in V$ a vertex set $X_v = X(v) \subseteq V(H)$ such that the sets $X_v$ with $v \in V$ are pairwise disjoint.

\item[(C2)] $H$ is a graph with vertex set $V(H) = \bigcup_{v \in V(G)}X_v$ such that each $X_v$ is an independent set of $H$, and, for any two distinct vertices $u, v \in V(G)$, the set $E_H(X_u,X_v)$ is the union of $\mu_G(u,v)$ (possibly empty) matchings of $H$.
\end{description}

Let $G$ be a graph and let $(X,H)$ be a cover of $G$. Let $uv\in E(G^o)$, let $X\subseteq X_u$, and $Y\subseteq X_v$. Then define $H(X,Y)=H[X\cup Y]$; note that $H(X,Y)$ is a bipartite graph with parts $X$ and $Y$, and $\De(H(X_u,X_v))\leq \mu_G(u,v)$ (by (C2)).  If $|X_v|\geq k$ for all $v\in V(G)$, we say that $(X,H)$ is a \DF{$k$-cover} of $G$. A \DF{transversal} of $(X,H)$ is a vertex set $T\subseteq V(H)$ such that $|T \cap X_v|=1$ for all $v \in V$. A set $T\subseteq V(H)$ is called \DF{partial transversal} of $(X,H)$ if $|T \cap X_v|\leq 1$ for all $v\in V$. For $Y\subseteq V(H)$, let $\dom(Y:G)=\set{v\in V(G)}{X_v\cap Y\not=\ems}$ be the \DF{domain} of $Y$ in $G$.


%

Colorings of graphs become a subject of interest only
when some restrictions to the color classes are imposed. Let $\cG$ denote the class of all graphs. A \DF{graph property} is a subclass of $\cG$ that is closed with respect to isomorphisms. Let $\cP$ be a graph property. The property $\cP$ is said to be \DF{non-trivial} if $\cP$ contains a non-empty graph, but not all graphs. We call $\cP$ \DF{monotone} if $\cP$ is closed under taking subgraphs; and we call $\cP$ \DF{hereditary} if $\cP$ is closed under taking induced subgraphs. If $\cP$ is closed under taking (vertex) disjoint unions, then $\cP$ is called \DF{additive}. Clearly, every monotone graph property is hereditary, but not conversely. An overview about hereditary graph properties is given in \cite{BorowieckiBrFr97}. Some popular graph properties that are non-trivial, monotone, and additive are the following:
$$\cO=\set{G\in \cG}{G \mbox{ is edgeless}},$$
and
$$\cD_k=\set{G\in \cG}{G \mbox{ is $k$-degenerate}}$$
with $k\geq 0$. Note that $\cD_0=\cO$, $\cD_1$ is the class of forests, and $\cO\subseteq \cD_k\subseteq \cD_{k+1}$ for all $k\geq 0$. If $\cP$ is additive, then a graph belongs to $\cP$ if and only if each of its components belong to $\cP$. For a non-trivial and hereditary graph property $\cP$, let
$$\CR(\cP)=\set{G\in \cG}{G\not\in \cP, \mbox{ but } G-v\in \cP \mbox{ for all } v\in V(G)}$$
and define
$$d(\cP)=\min\set{\de(G)}{G\in CR(\cP)}.$$
Note that $\CR(\cD_k)$ contains all connected $(k+1)$-regular graphs and $d(\cD_k)=k+1$. In particular $\CR(\cO)=\langle K_2 \rangle$, that is, each graph in $\CR(\cO)$ is isomorphic to $K_2$,  and $d(\cO)=1$. The statements of the following proposition are well known  and easy to prove (see e.g. \cite[Proposition 1]{Schweser18}).

\begin{proposition} Let $\cP$ be a non-trivial and hereditary graph property. Then the following statements hold:
\begin{itemize}
\item[{\rm (a)}] $K_0, K_1 \in \cP$.
\item[{\rm (b)}] A graph $G$ belongs to $\CR(\cP)$ if and only if each proper induced subgraph of $G$ belongs to $\cP$, but $G$ itself does not belong to $\cP$.
\item[{\rm (c)}] $G\not\in \cP$ if and only if $G$ contains an induced subgraph $G'$ with $G'\in \CR(\cP)$.
\item[{\rm (d)}] $\CR(\cP)\not= \ems$ and $d(\cP)\in \nato$.
\item[{\rm (e)}] If $G\not\in \cP$, but $G-v\in \cP$ for some vertex $v$ of $G$, then $d_G(v)\geq d(\cP)$.
\end{itemize}
\label{prop:smooth}
\end{proposition}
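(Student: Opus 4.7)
The overall plan is to establish the five items in the order (a), (b), (c), (d), (e), with each step building on the previous ones. All parts are short consequences of the two assumptions on $\cP$ (non-triviality and hereditarity), but this ordering lets me reuse (b) and (c) as lemmas for (d) and (e). None of the work is technically heavy, so the task is mostly to unwind definitions.

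For (a) I use non-triviality to pick some non-empty $G\in \cP$, then apply hereditarity to the induced subgraphs $G[\ems]=K_0$ and $G[\{v\}]=K_1$ for any $v\in V(G)$. Item (b) is essentially a rephrasing of the definition of $\CR(\cP)$: if $G\in \CR(\cP)$ and $G'$ is any proper induced subgraph, then $G'\subseteq G-v$ for some $v\notin V(G')$, and hereditarity of $\cP$ promotes $G-v\in \cP$ to $G'\in \cP$; the reverse implication is immediate because $G-v$ is itself a proper induced subgraph. For (c), the forward direction uses a minimal-counterexample argument: among all induced subgraphs of $G$ that fail to lie in $\cP$, pick one $G'$ of minimum order, so by (b) we have $G'\in \CR(\cP)$; the reverse direction is immediate from hereditarity, since $G \in \cP$ would force the induced subgraph $G'\in\cP$. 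Part (d) then falls out: non-triviality yields some graph outside $\cP$, so (c) gives $\CR(\cP)\neq \ems$, and $\set{\de(H)}{H\in \CR(\cP)}$ is a non-empty subset of $\nato$, which has a minimum by well-ordering.

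The only step that requires slightly more than unwinding definitions is (e), which I expect to be the main (if mild) obstacle, because one has to locate a critical induced subgraph containing the prescribed vertex. Given $G\notin \cP$ with $G-v\in \cP$, I apply (c) to produce some induced subgraph $G'\subseteq G$ with $G'\in \CR(\cP)$. The crucial observation is that $v\in V(G')$: otherwise $G'$ would be an induced subgraph of $G-v$, and hereditarity would force $G'\in \cP$, contradicting $G'\in \CR(\cP)$. Consequently $d_G(v)\geq d_{G'}(v)\geq \de(G')\geq d(\cP)$, which is exactly the asserted lower bound.
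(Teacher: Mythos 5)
Your argument is correct throughout: (a) follows from non-triviality plus hereditarity applied to the empty and one-vertex induced subgraphs; (b) is the definitional unwinding with the ``pick $v\notin V(G')$'' step; (c) forward uses a minimum-order induced subgraph failing $\cP$; (d) combines (c) with well-ordering of $\nato$; and in (e) you correctly locate a $\CR(\cP)$-subgraph that must contain $v$ (else hereditarity on $G-v$ is contradicted) and then chain $d_G(v)\geq d_{G'}(v)\geq \de(G')\geq d(\cP)$. The paper does not spell out a proof (it cites a reference and calls the statements ``well known and easy to prove''), and your derivation is precisely the standard one that the authors are invoking.
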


Let $\cP$ be a graph property, and let $G$ be a graph. A \DF{$\cP$-coloring} of $G$ with color set $C$ is a coloring $\f$ of $G$ with color set $C$  such that $G[\fin(c)]\in \cP$ for all $c\in C$. If $L$ is a list assignment for $G$, then a \DF{$(\cP,L)$-coloring} of $G$ is an $\cP$-coloring $\f$ of $G$ such that $\f(v)\in L(v)$ for all $v\in V(G)$. The \DF{$\cP$-chromatic number} of $G$, denoted by $\cn(G:\cP)$, is the least integer $k$ for which $G$ admits a $\cP$-coloring with a set of $k$ colors. The \DF{$\cP$-choice number} of $G$, denoted by $\lcn(G:\cP)$, is the least integer $k$ such that $G$ has an $(\cP,L)$-coloring whenever $L$ is a list assignment of $G$ satisfying $|L(v)|\geq k$ for all $v\in V(G)$. If $(X,H)$ is a cover of $G$, then a \DF{$\cP$-transversal} of $(X,H)$ is a transversal $T$ of $(X,H)$ such that $H[T]\in \cP$. An $\cO$-transversal of $(X,H)$ is also referred to as an \DF{independent transversal} of $(X,H)$. A $\cP$-transversal of $(X,H)$ is also called a \DF{$(\cP,(X,H))$-coloring} of $G$. Note that $G$ admits a $(\cP,(X,H))$-coloring if and only if $G$
has a coloring $\f$ with color set $V(H)$ such that $T=\set{\f(v)}{v \in V(G)}$ is a $\cP$-transversal of $(X,H)$. The \DF{$\cP$-DP-chromatic number} of $G$, denoted by $\dcn(G:\cP)$, is the least integer $k$ such that $G$ admits a $(\cP,(X,H))$-coloring whenever $(X,H)$ is a $k$-cover of $G$. We also write $\cn(G), \lcn(G)$ and $\dcn(G)$ for $\cn(G:\cO), \lcn(G:\cO)$ and $\dcn(G:\cO)$, and the corresponding terms are \DF{chromatic number}, \DF{choice number}, and \DF{\text{DP}-chromatic number}, respectively. The choice number was introduced, independently,  by Vizing \cite{Vizing76} and by Erd\H{o}s, Rubin, and Taylor \cite{ErdosRubTay79}. The DP-chromatic number was introduced by \v{D}vor{\'a}k and Postle \cite{DvorakPo15}. From the definition it follows
that every graph $G$ satisfies
\begin{align} \label{equation:cn<lcn<dcn}
\cn(G:\cP)\leq \lcn(G:\cP)\leq \dcn(G:\cP)
\end{align}
provided that $\cP$ is non-trivial, hereditary, and additive.
The first inequality follows from the fact that a $\cP$-coloring of a graph $G$ with color set $C$ may be considered as a $(\cP,L)$-coloring of $G$ for the constant list assignment $L\equiv C$. To see the second inequality, suppose that $\dcn(G:\cP)=k$ and let $L$ be a list assignment for $G$ with $|L(v)|\geq k$ for all $v\in V(G)$. Define $(X,H)$ to be the cover of $G$ such that $X_v=\{v\}\times L(v)$ for all $v\in V(G)$ and, for two distinct vertices $(u,c)$ and $(v,c')$ of $H$, we have
$$\mu_H((u,c),(v,c'))=
\left\{ \begin{array}{ll}
\mu_G(u,v) & \mbox{\rm if } c=c',\\
0 & \mbox{\rm if } c\not=c'.
\end{array}
\right.$$
We say that $(X,H)$ is the cover \DF{associated} with the list assignment $L$. It is easy to check that $(X,H)$ is indeed a $k$-cover of $G$, and $(X,H)$ has a $\cP$-transversal if and only if $G$ admits an $(\cP,L)$-coloring. This implies, in particular, that $\lcn(G:\cP)\leq k$. Note that the additivity of $\cP$ is only needed for the second inequality.

We call a graph property \DF{reliable} if it is non-trivial, hereditary and additive. In what follows we shall focus mainly on such properties. Suppose that $\cP$ is a reliable graph property and $G$ is an arbitrary graph. Then
\begin{align}
\label{equation:subgraph}
G' \subseteq G \mbox{ implies } \dcn(G':\cP) \leq \dcn(G:\cP).
\end{align}
This follows from the fact that a $k$-cover $(X',H')$ of $G'$ can be extended to a $k$-cover $(X,H)$ of $G$ such that $H'$ is obtained from $H$ by deleting all sets $X_v$ with $v\in V(G)\sm V(G')$. Hence, if $T$ is a $\cP$-transversal of $(X,H)$, then $T'=T \cap V(H')$ is a $\cP$-transversal of $G'$, since $H'[T']$ is an induced subgraph of $H[T]$ and $\cP$ is hereditary. Since $\cP$ is additive, it then follows from \eqref{equation:subgraph} that
\begin{align}
\label{equation:component}
\dcn(G:\cP)= \max \set{\dcn(G':\cP)}{G' \mbox{ is a component of } G}.
\end{align}
Furthermore, we claim that the deletion of any vertex or edge of $G$ decreases the $\cP$-DP-chromatic number of $G$ by at most $\mu(G)$. If $e\in E_G(u,v)$, then $G-v$ is a subgraph of $G-e$. Hence it suffices to show that every vertex $v$ of $G$ satisfies
\begin{align}
\label{equation:deletion}
\dcn(G:\cP)-\mu(G) \leq \dcn(G-v:\cP) \leq \dcn(G:\cP).
\end{align}
The second inequality follows from \eqref{equation:subgraph}. To see the first inequality define $k=\dcn(G-v:\cP)$ and let $(X,H)$ be a $(k+\mu(G))$-cover of $G$. Let $x\in X_v$ and let $(X',H')$ be the cover of $G'$ such that $X_u'=X_u\sm N_H(x)$ for all $u\in V(G')$ and $H'=H-(X_v \cup N_H(x))$. By (C2), $(X',H')$ is a $k$-cover of $G'$ and, therefore, $(X',H')$ has a $\cP$-transversal $T'$. Then $T=T'\cup \{x\}$ is a $\cP$-transversal of $(X,H)$, since $\cP$ is reliable and $H[T]$ is the disjoint union of $H'[T']$ and $K_1$. Consequently, $\dcn(G:\cP)\leq k+\mu(G)=\dcn(G-v:\cP)+\mu(G)$. This proves \eqref{equation:deletion}.

We say that $G$ is \DF{$(\cP,\dcn)$-critical} if every proper induced subgraph $G'$ of $G$ satisfies $\dcn(G':\cP)<\dcn(G:\cP)$. By \eqref{equation:component} it follows that every $(\cP,\dcn)$-critical graph is empty or connected.

\begin{proposition}
Let $\cP$ be a reliable graph property and let $G$ be a graph. Then $G$ has an induced subgraph $G'$ such that $\dcn(G':\cP)=\dcn(G:\cP)$ and $G'$ is $(\cP,\dcn)$-critical.
\label{prop:critsub}
\end{proposition}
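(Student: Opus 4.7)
The plan is to apply a standard extremal/minimality argument. Among all induced subgraphs $H$ of $G$ satisfying $\dcn(H:\cP)=\dcn(G:\cP)$, I would pick one of minimum order; call it $G'$. Such a subgraph exists since $G$ itself lies in this family (and induced subgraphs of $G$ have bounded order, so a minimum is attained).

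Next I would verify that $G'$ is $(\cP,\dcn)$-critical, i.e.\ that every proper induced subgraph $G''$ of $G'$ satisfies $\dcn(G'':\cP)<\dcn(G':\cP)$. Since the relation "induced subgraph of" is transitive, $G''$ is also an induced subgraph of $G$. By the minimality of $|G'|$ in the chosen family, we cannot have $\dcn(G'':\cP)=\dcn(G:\cP)=\dcn(G':\cP)$. On the other hand, \eqref{equation:subgraph} (which applies since $\cP$ is reliable) gives $\dcn(G'':\cP)\leq \dcn(G':\cP)$. Combining the two yields the desired strict inequality $\dcn(G'':\cP)<\dcn(G':\cP)$.

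This is essentially all there is to it; there is no genuine obstacle. The only thing to be slightly careful about is the equivalent reformulation of criticality from the paragraph preceding the proposition: for connected graphs, criticality via \emph{induced subgraphs} is equivalent to criticality via single vertex deletions, but one must invoke \eqref{equation:subgraph} rather than \eqref{equation:deletion} to handle arbitrary proper induced subgraphs directly. With \eqref{equation:subgraph} in hand, the minimality of $|G'|$ furnishes the strict drop in $\dcn(\cdot:\cP)$ automatically.
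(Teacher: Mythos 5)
Your argument is correct and is exactly the minimality argument the paper uses (choose a minimum-order induced subgraph with the same $\dcn$, then use \eqref{equation:subgraph} to get the strict drop on proper induced subgraphs); the paper just states it more tersely.
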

\begin{proof}
Among all induced subgraphs $G'$ of $G$ satisfying $\dcn(G':\cP)=\dcn(G:\cP)$ we choose one whose order is minimum. Then $G'$ has the desired properties.
\end{proof}

The above proposition implies that many problems related to the $(\cP,\dcn)$-chromatic number can be reduced to problems about $(\cP,\dcn)$-critical graphs. The study of critical graphs with respect to the ordinary chromatic number was initiated by Dirac in the 1950s (see e.g. \cite{Dirac53} and \cite{Dirac57}) and has attracted a lot of attention until today.

Let $G$ be a graph, and let $(X,H)$ be a cover of $G$. Given a vertex $v\in V(G)$, a partial transversal $T$ of $(X,H)$ such that $\dom(T:G)=V(G-v)$ and $H[T]\in \cP$ is said to be a \DF{$(\cP,v)$-transversal} of $(X,H)$. We call $(X,H)$ a \DF{$\cP$-critical cover} of $G$ if $(X,H)$ has no $\cP$-transversal, but for every vertex $v\in V(G)$ there exists a $(\cP,v)$-transversal. Note that if $G$ is a $(\cP,\dcn)$-critical graph with $\dcn(G:\cP)=k$, then $\dcn(G-v:\cP)\leq k-1$ for all $v\in V(G)$ and, therefore, $G$ has a $\cP$-critical $(k-1)$-cover.

Note that if $\cP$ is a reliable graph property, then any graph in $\CR(\cP)$ is connected. Furthermore, since $K_1\in \cP$ (by Proposition~\ref{prop:smooth}(a)), this implies that
$d(\cP)\geq 1$ and $\cO\subseteq \cP$.

\begin{proposition}
\label{prop:low+vertex}
Let $\cP$ be a reliable graph property with $d(\cP)=r$, let $G$ be graph, and let $(X,H)$ be a $\cP$-critical cover of $G$. Then the following statements hold:
\begin{itemize}
\item[{\rm (a)}] $d_G(v)\geq r|X_v|$ for all $v\in V(G)$.
\item[{\rm (b)}] Let $v$ be a vertex of $G$ such that $d_G(v)=r|X_v|$, and let $T$ be a $(\cP,v)$-transversal of $(X,H)$. Moreover, for $x\in X_v$, let
    $$H_x=H[T \cup \{x\}] \mbox{ and } d_x=d_{H_x}(v).$$
    Then $d_x=r$ for all $x\in X_v$ and $d_G(v)=\sum_{x\in X_v}d_x$.
\end{itemize}
\end{proposition}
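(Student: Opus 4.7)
The plan is to prove (a) by using the $\cP$-criticality of $(X,H)$ to control the local structure at each vertex. Fix $v\in V(G)$ and invoke criticality to obtain a $(\cP,v)$-transversal $T$; so $T$ is a partial transversal with $\dom(T)=V(G-v)$ and $H[T]\in\cP$. For every $x\in X_v$, the set $T\cup\{x\}$ is already a full transversal of $(X,H)$, so if $H[T\cup\{x\}]\in\cP$ it would be a $\cP$-transversal of $(X,H)$, contradicting the assumption that $(X,H)$ has none. Hence $H[T\cup\{x\}]\notin\cP$, while $(H[T\cup\{x\}])-x=H[T]\in\cP$; applying Proposition~\ref{prop:smooth}(e) to $H[T\cup\{x\}]$ and its vertex $x$ then yields $d_{x,v}=d_{H[T\cup\{x\}]}(x)\geq d(\cP)=r$.

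Next I would translate this per-vertex lower bound into the global inequality by a double count. Observe that $\sum_{x\in X_v}d_{x,v}$ equals $|E_H(X_v,T)|$, since each $N_{x,v}$ consists of the neighbors of $x$ inside $T$. By condition (C2), every edge of $H$ incident with $X_v$ lies in a matching $M_{vu}$ for some $u\in N_G(v)$, and because $T\cap X_u$ is a single vertex, this $u$ contributes at most one edge to $E_H(X_v,T)$. Consequently $\sum_{x\in X_v}d_{x,v}\leq d_G(v)$, and combining with the pointwise bound gives $r|X_v|\leq d_G(v)$, proving (a).

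For (b), the hypothesis $d_G(v)=r|X_v|$ forces equality throughout the chain $r|X_v|\leq\sum_{x\in X_v}d_{x,v}\leq d_G(v)$. Since each summand is at least $r$, I conclude $d_{x,v}=r$ for every $x\in X_v$. The pairwise disjointness of the sets $N_{x,v}$ comes directly from the matching property in (C2): for each $u\in N_G(v)$, the unique vertex $T\cap X_u$ has at most one partner in $X_v$ under $M_{vu}$, so it can appear in at most one $N_{x,v}$. The remaining counting identity is just the restatement of $\sum_{x\in X_v}d_{x,v}=d_G(v)$ that equality in the chain provides.

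The only real subtlety is the clean application of Proposition~\ref{prop:smooth}(e): one must verify both $H[T\cup\{x\}]\notin\cP$ (which is exactly where $\cP$-criticality of the cover enters, forbidding completion of $T$ to a $\cP$-transversal) and $(H[T\cup\{x\}])-x\in\cP$ (which is precisely the definition of a $(\cP,v)$-transversal). Once this is in place, everything else is bookkeeping with the matching structure supplied by (C2).
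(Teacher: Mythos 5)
Your proof is correct and follows essentially the same route as the paper: obtain a $(\cP,v)$-transversal, note that $H[T\cup\{x\}]\notin\cP$ by the absence of a full $\cP$-transversal, apply Proposition~\ref{prop:smooth}(e) to get $d_{x,v}\ge r$, and then use the matching structure from (C2) to bound $\sum_{x\in X_v}d_{x,v}\le d_G(v)$ and deduce disjointness of the $N_{x,v}$. The only difference is that you spell out the edge-counting bookkeeping a bit more explicitly than the paper does.
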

\begin{proof}
Let $v$ be an arbitrary vertex of $G$. Since $(X,H)$ is a $\cP$-critical cover of $G$, there is a  $(\cP,v)$-transversal of $G$. Let $T$ be an arbitrary $(\cP,v)$-transversal of $G$. Since $(X,H)$ has no $\cP$-transversal, $H_x=H[T\cup \{x\}]\not\in \cP$ for all $x\in X_v$. Then Proposition~\ref{prop:smooth}(e) implies that $d_x=d_{H_x}(x) \geq d(\cP)=r$ for all $x\in X_v$. Since $|T\cap X_u|=1$ for all $u\in V(G-v)$, we then obtain from (C2) that
$$d_G(v)=|E_G(v)|\geq \sum_{x\in X_v} |E_{H_x}(x)|=\sum_{x\in X_v}d_x\geq r|X_v|$$
Then $d_G(v)=r|X_v|$ implies that $d_x=r$ for all $x\in X_v$. Thus (a) and (b) are proved.
\end{proof}

Let $\cP$ be a reliable graph property with $d(\cP)=r$, let $G$ be a graph, and let $(X,H)$ be a $\cP$-critical cover of $G$. Then define
$$V(G,X,H,\cP)=\set{v\in V(G)}{d_G(v)=r|X_v|}$$
A vertex $v\in V(G)$ is said to be a \DF{low vertex} if $v\in V(G,X,H,\cP)$, and a \DF{high vertex}, otherwise. By the above proposition, every high vertex $v$ of $G$ satisfies $d_G(v)\geq r|X_v|+1$.
Moreover, we call $G[V(G,X,H,\cP)]$ the \DF{low vertex subgraph} of $G$ with respect to $(X,H,\cP)$.

The next result, which is one of our main results in this paper, characterizes the block structure of the low vertex subgraph of cover critical graphs. For covers associated with list assignments of simple graphs, this result was obtained in 1995 by Borowiecki, Drgas-Burchardt and Mih\'ok \cite[Theorem 3]{BorowieckiDrMi95}. First we need some notation. If $G$ is a graph and $t\in \nat$, then $G'=tG$ denotes the graph that results from $G$ by replacing each of its edges with $t$ parallel edges, that is, $V(G')=V(G)$ and $\mu_{G'}(u,v)=t\mu_G(u,v)$ for any two distinct vertices $u, v$ of $G$. A graph $G$ is called a \DF{brick} if $G=tK_n$ with $t,n\in \nat$, or $G=tC_n$ with $t,n\in \nat$ and $n\geq 3$. The proof of the next result is given at the end of Section~\ref{sec:degeneracy}.

\begin{theorem}
\label{theorem:lowvertexA1}
Let $\cP$ be a reliable graph property with $d(\cP)=r$, let $G$ be a graph, and let $(X,H)$ be a $\cP$-critical cover of $G$. If $B$ is a block of the low vertex subgraph $G[V(G,X,H,\cP)]$ of $G$, then $B$ is a brick, or $B=tB'$ with $t\in \nat$ such that either $B'\in \CR(\cP)$ and $B'$ is $r$-regular, or $B'\in \cP$ and $\De(B')\leq r$.
\end{theorem}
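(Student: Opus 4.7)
The plan is to argue by contradiction. Suppose $B$ is a block of $F$ that is not complete, not a cycle, does not belong to $\CR(\cP)$ with $r$-regularity, and does not belong to $\cP$ with $\De(B)\leq r$. I would derive a contradiction by constructing a $\cP$-transversal of $(X,H)$, violating the assumed $\cP$-criticality. The first move is to localize the problem to $B$: pick any vertex $v_0\in V(B)$, use $\cP$-criticality to obtain a $(\cP,v_0)$-transversal $T_0$ of $(X,H)$, and then discard from $T_0$ every element lying in some $X_u$ with $u\in V(B)$. This produces a partial transversal $T^{\mathrm{ext}}$ with $\dom(T^{\mathrm{ext}})=V(G)\sm V(B)$ satisfying $H[T^{\mathrm{ext}}]\in\cP$. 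The task is then to choose, for each $u\in V(B)$, a vertex $t_u\in X_u$ so that $T^{\mathrm{ext}}\cup\{t_u:u\in V(B)\}$ is a $\cP$-transversal of the whole cover.

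The core structural input is Proposition~\ref{prop:low+vertex}(b): since every $u\in V(B)$ is a low vertex, the neighborhoods $N_{x,u}$ partition the relevant portion of any $(\cP,u)$-transversal into blocks of size exactly $r$. Consequently, when one records for each $x\in X_u$ its number of $H$-neighbors sitting in $T^{\mathrm{ext}}$, these quantities sum over $x\in X_u$ to $r(d_G(u)-d_B(u))$, leaving a well-controlled residual cover $(X^B,H^B)$ on $V(B)$ in which each "column" $X_u$ still has cardinality $|X_u|$ but each $x\in X_u$ carries a residual obstruction budget totaling $r\,d_B(u)$. Extending $T^{\mathrm{ext}}$ to a $\cP$-transversal is therefore equivalent to finding a $\cP$-transversal of the residual cover $(X^B,H^B)$ on the block $B$.

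The main leverage should then come from an extendability/DP-degeneracy lemma developed in Section~\ref{sec:degeneracy}: if $B$ is not one of the four exceptional types, then every such residual $\cP$-cover of $B$ admits a $\cP$-transversal. Intuitively, complete graphs and cycles are the classical Rubin blocks in which no color can be locally reassigned; $r$-regular members of $\CR(\cP)$ are the rigid critical configurations for $\cP$ itself; graphs in $\cP$ with $\De(B)\leq r$ are "degenerate enough" to already be allowed as a single color class. In any other block $B$, there exists a vertex $w\in V(B)$ admitting a DP-extension argument — either because $d_B(w)<r|X_w|$ at some boundary of a non-$r$-regular configuration, or because $B$ has a proper induced subgraph in $\CR(\cP)$ yielding a reduction.

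The main obstacle is precisely this DP-extendability lemma: one must prove, in the generalized $\cP$-setting, a cover-theoretic analogue of the Erd\H{o}s--Rubin--Taylor block theorem that classifies the "hard" blocks as exactly the four listed classes. This requires carefully tracking how a partial $\cP$-transversal of a subblock of $B$ can be augmented across a cut-free $2$-connected structure, and handling both the "saturated" case (where the residual budget is tight and one must imitate Kempe-type swaps inside the cover) and the "slack" case (where a straightforward greedy extension suffices). Once this lemma is available, applying it to the residual cover $(X^B,H^B)$ yields the desired $\cP$-transversal of $(X,H)$ and closes the contradiction.
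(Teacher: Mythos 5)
Your top-level architecture matches the paper's: localize to a block $B$ of the low vertex subgraph, ``freeze'' a partial $\cP$-transversal on $G-V(B)$, and then invoke a Section~\ref{sec:degeneracy} classification lemma to characterize exactly which blocks can fail to be extendable. But the pivot step --- ``Extending $T^{\mathrm{ext}}$ to a $\cP$-transversal is therefore equivalent to finding a $\cP$-transversal of the residual cover $(X^B,H^B)$ on the block $B$'' --- is false as stated and hides the essential idea. If $T_B$ is a transversal of $(X^B,H^B)$ with $H^B[T_B]\in\cP$, it does not follow that $H[T^{\mathrm{ext}}\cup T_B]\in\cP$: the edges of $H$ joining $\bigcup_{u\in V(B)}X_u$ to $T^{\mathrm{ext}}$ can create a forbidden induced subgraph straddling both sides, even when each side is separately fine. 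The paper does not pose the residual problem as a $\cP$-transversal problem at all; it poses it as a \emph{variable degeneracy} problem. One sets $f(x)=\max\{0,\,r-d_{x,u}\}$ for $x\in X_u$, $u\in V(B)$, where $d_{x,u}$ counts the neighbors of $x$ already present in $T^{\mathrm{ext}}$, and asks for a transversal $T'$ of $(X',H')$ with $H'[T']$ strictly $f$-degenerate. The translation back to $\cP$ then goes through the structure of $\CR(\cP)$: if $H[T^{\mathrm{ext}}\cup T']\notin\cP$, Proposition~\ref{prop:smooth}(c) gives an induced $\CR(\cP)$-subgraph $H[T_1]$, which by Proposition~\ref{prop:smooth}(e) has minimum degree at least $r$; since $H[T^{\mathrm{ext}}]\in\cP$, $T_1$ meets $T'$, and strict $f$-degeneracy forces a vertex of $H[T_1\cap T']$ with too few neighbors, contradicting $\delta(H[T_1])\ge r$. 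None of this survives your ``equivalence,'' so your reduction has a gap precisely at the point where the whole argument pivots.

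Two further points. First, there is an arithmetic slip: $\sum_{x\in X_u}d_{x,u}\le d_G(u)-d_B(u)$ (not $r$ times that), and since $u$ is low, $d_G(u)=r|X_u|$, so $\sum_{x\in X_u}f(x)\ge d_B(u)$ --- which is exactly the degree-feasibility needed to apply Theorem~\ref{main_theorem} to $(B,X',H',f)$; your factor of $r$ propagates and would break the feasibility check. Second, you wave at the classification lemma without saying what it classifies: Theorem~\ref{main_theorem} characterizes uncolorable degree-feasible configurations as the constructible ones, and since $B$ is a block the only possibilities are the (K)-, (C)-, and (M)-configurations. The first two give $B$ complete or a cycle; the (M)-case needs a final argument (reading off $f(x_u)=d_B(u)=r-d_{x_u,u}\le r$, hence $\Delta(B)\le r$, and then using Proposition~\ref{prop:smooth}(c),(e) once more to split into $B\in\cP$ or $B\in\CR(\cP)$ $r$-regular), which you omit entirely. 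So while the skeleton is the right one, the load-bearing steps --- the variable degeneracy formulation, the $\CR(\cP)$-based glueing argument, and the (M)-case analysis --- are all missing.
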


In 1963, Gallai~\cite[Satz (E1)]{Gallai63a} characterized the low vertex subgraph of simple graphs being critical with respect to the ordinary chromatic number. He proved that each block of such a low vertex subgraph is a complete graph or an odd cycle, thereby extending Brooks' famous theorem in \cite{Brooks41}. That this also holds for list critical simple graphs was proved by Thomassen \cite{Thomassen97}, an extension to list critical simple hypergraphs was given by Kostochka and Stiebitz \cite{KostStieb03}.  For simple graphs, both results are special cases of Theorem~\ref{theorem:lowvertexA1} by putting $\cP=\cO$ and by choosing covers associated either with constant list assignments or with arbitrary list assignments.

\begin{corollary}
Let $\cP$ be a reliable graph property with $d(\cP)=r$. Then the following statements hold:
\begin{itemize}
\item[{\rm (a)}] If $G$ is a $(\cP,\dcn)$-critical graph with $\dcn(G:\cP)=k+1$ and $k\geq 0$, then $\de(G)\geq rk$. Moreover, if $U=\set{v\in V(G)}{d_G(v)=rk}$, then each block $B$ of $G[U]$ satisfies that $B$ is a brick, or $B=tB'$ with $t\in \nat$ such that either $B'\in \CR(\cP)$ and $B'$ is $r$-regular, or $B'\in \cP$ and $\De(B')\leq r$.
\item[{\rm (b)}] Every graph $G$ satisfies $\dcn(G:\cP)\leq \frac{\De(G)}{r}+1$.
\end{itemize}
\label{corol:gallai}
\end{corollary}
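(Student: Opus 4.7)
My plan for part (a) is to exhibit a $\cP$-critical cover of $G$ in which every $X_v$ has exactly $k$ elements, and then combine Proposition~\ref{prop:low+vertex} with Theorem~\ref{theorem:lowvertexA1}. Since $\dcn(G:\cP)=k+1$, there is a $k$-cover $(X,H)$ of $G$ with no $\cP$-transversal. I would first normalize it: for each $v\in V(G)$ pick a $k$-subset $X_v'\subseteq X_v$ and let $H'=H\bigl[\bigcup_{v\in V(G)} X_v'\bigr]$. Any $\cP$-transversal of $(X',H')$ would also be a $\cP$-transversal of $(X,H)$, so $(X',H')$ still has none. On the other hand, criticality of $G$ together with \eqref{equation:deletion} gives $\dcn(G-v:\cP)=k$ for every $v$, so the $k$-cover of $G-v$ induced by $(X',H')$ has a $\cP$-transversal, and this is precisely a $(\cP,v)$-transversal of $(X',H')$. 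Hence $(X',H')$ is a $\cP$-critical cover of $G$.

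Proposition~\ref{prop:low+vertex}(a) applied to $(X',H')$ then yields $d_G(v)\geq r|X_v'|=rk$ for every $v\in V(G)$, proving $\delta(G)\geq rk$. Because the cover is uniform of size $k$, a vertex $v$ is low with respect to $(G,X',H',\cP)$ if and only if $d_G(v)=rk$, so $V(G,X',H',\cP)=U$ and hence $G[U]$ is exactly the low vertex subgraph. When $U$ is nonempty, Theorem~\ref{theorem:lowvertexA1} then provides the asserted classification of the blocks of $G[U]$, completing part~(a).

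For part~(b) I would reduce to (a) via Proposition~\ref{prop:critsub}: pick a $(\cP,\dcn)$-critical induced subgraph $G'$ of $G$ with $\dcn(G':\cP)=\dcn(G:\cP)$. If $\dcn(G:\cP)\leq 1$, the inequality $\dcn(G:\cP)\leq \Delta(G)/r+1$ is immediate from $\Delta(G)/r\geq 0$. Otherwise write $\dcn(G:\cP)=k+1$ with $k\geq 1$, apply (a) to $G'$ to obtain $\delta(G')\geq rk$, and conclude $\Delta(G)\geq d_G(v)\geq d_{G'}(v)\geq rk$ for any $v\in V(G')$, so $\dcn(G:\cP)=k+1\leq \Delta(G)/r+1$.

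The one step deserving care is the normalization in part (a); everything else is a direct translation between cover-language and chromatic-number statements. Heredity of $\cP$ is what makes shrinking each $X_v$ to a $k$-subset harmless (no new $\cP$-transversals are created), and the deletion inequality \eqref{equation:deletion} is precisely what guarantees that the induced $k$-cover of $G-v$ still admits a $\cP$-transversal, so that the resulting cover is $\cP$-critical in the sense required by Proposition~\ref{prop:low+vertex} and Theorem~\ref{theorem:lowvertexA1}.
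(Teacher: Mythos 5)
Your proposal is correct and follows essentially the same route as the paper: pass to a $\cP$-critical cover, apply Proposition~\ref{prop:low+vertex} to get the minimum-degree bound, identify $U$ with the low-vertex set and invoke Theorem~\ref{theorem:lowvertexA1} for the block structure, then deduce (b) from (a) via Proposition~\ref{prop:critsub}. The one place you are more careful than the paper is the normalization: the paper simply takes a $\cP$-critical $k$-cover and asserts $U=V(G,X,H,\cP)$, but since a $k$-cover only requires $|X_v|\geq k$, a vertex $v$ with $|X_v|>k$ and $d_G(v)=r|X_v|>rk$ would lie in $V(G,X,H,\cP)$ but not in $U$. Your step of passing to a sub-cover with $|X_v|=k$ for every $v$ (using heredity of $\cP$ to preserve uncolorability and \eqref{equation:deletion} plus \eqref{equation:subgraph} to preserve the $(\cP,v)$-transversals) is exactly what is needed to make that equality true, and it is the right fix for a gap the paper leaves implicit.
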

\begin{proof}
To prove (a), note that the assumptions imply that $G$ has a $\cP$-critical $k$-cover, say $(X,H)$ such that $|X_v|=k$ for all $v\in V(G)$. Then $d_G(v)\geq r|X_v|= rk$ for all $v\in V(G)$ (by Proposition~\ref{prop:low+vertex}). Hence $\de(G)\geq rk$ and $U=V(G,X,H,\cP)$ and, therefore, the statements about the blocks in $\cB(G[U])$ are implied by Theorem~\ref{theorem:lowvertexA1}. To prove (b), let $G$ be an arbitrary graph with $\dcn(G:\cP)=k+1$. Then there is a $(\cP,\dcn)$-critical graph $G'$ with $G'\subseteq G$ and $\dcn(G':\cP)=k+1$ (by Proposition~\ref{prop:critsub}). Hence, $\De(G)\geq \De(G')\geq \de(G')\geq rk$ (by (a)), which leads to $\dcn(G:\cP)=k+1\leq \De(G)/r+1$.
\end{proof}

For the ordinary DP-chromatic number (i.e. for $\cP=\cO$), Corollary~\ref{corol:gallai}(a) was proved by Bernshteyn, Kostochka, and Pron \cite{BernsteynKoPro17}; they indeed proved Theorem~\ref{theorem:lowvertexA1} for $\cP=\cO$. Since $\CR(\cO)=\langle K_2 \rangle$ and $d(\cP)=1$, the only type of blocks that can occur in this case are bricks. As noticed by Bernshteyn, Kostochka, and Pron  \cite{BernsteynKoPro17}, for $t,n\in \nat$ with $n\geq 3$, we have $\dcn(tC_n)=2t+1$ even in the case when $\mdo{n}{0}{2}$.

For a reliable graph property $\cP$  and a graph $G$, we have $\dcn(G:\cP)=0$ if and only if $|G|=0$; and $\dcn(G:\cP)=1$ if and only if $G\in \cP$. Furthermore, $G\in \CR(\cP)$ if and only if $G$ is $(\cP,\dcn)$-critical and $\dcn(G:\cP)=2$ (Proposition~\ref{prop:smooth}(b)). Next, we want to establish a Brooks type result for the $\cP$-DP-chromatic number. The case $\cP=\cO$ of the following result was obtained by Bernshteyn, Kostochka, and Pron \cite{BernsteynKoPro17}.

\begin{theorem}
\label{theorem:Brooks}
Let $\cP$ be a reliable graph property with $d(\cP)=r$, and let $G$ be a connected simple graph. Then
\begin{align}
\label{equation:brooks}
\dcn(G:\cP)\leq \left\lceil \frac{\De(G)}{r} \right\rceil,
\end{align}
unless $G=K_{kr+1}$ for some integer $k\geq 0$, or $G$ is $r$-regular and $G\in \CR(\cP)$, or $\cP=\cO$ and $G$ is a cycle.
\end{theorem}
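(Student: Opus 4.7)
The plan is to prove the contrapositive: if $G$ is a connected graph with $\dcn(G:\cP) > \lceil \De(G)/r \rceil$, then $G$ falls into one of the three exceptional families. My starting observation is that Corollary~\ref{corol:gallai}(b) together with integrality already gives $\dcn(G:\cP) \le \lfloor \De(G)/r \rfloor + 1$, which equals $\lceil \De(G)/r \rceil$ whenever $r \nmid \De(G)$. Hence I may assume $r \mid \De(G)$, write $\De(G) = rk$, and read off from Corollary~\ref{corol:gallai}(b) that $\dcn(G:\cP) = k+1$. The case $k=0$ yields $G = K_1 = K_{0 \cdot r + 1}$, the first exception, so I assume $k \ge 1$.

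Next I pass to a critical subgraph and use connectedness to recover $G$ itself. By Proposition~\ref{prop:critsub} I take a $(\cP,\dcn)$-critical induced subgraph $G' \subseteq G$ with $\dcn(G':\cP) = k+1$; Corollary~\ref{corol:gallai}(a) gives $\de(G') \ge rk$, while $\De(G') \le \De(G) = rk$ forces $G'$ to be $rk$-regular. If $V(G') \subsetneq V(G)$, connectedness of $G$ supplies a vertex $v \in V(G')$ with a neighbor outside $V(G')$, giving $d_G(v) > rk = \De(G)$, a contradiction. So $G = G'$, and $G$ admits a $\cP$-critical $k$-cover $(X,H)$; from $r|X_v| \le d_G(v) = rk$ and $|X_v| \ge k$ it follows that $|X_v| = k$ for every $v$, so every vertex of $G$ is a low vertex. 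Corollary~\ref{corol:gallai}(a) applied with $U = V(G)$ now tells me that every block of $G$ is a complete graph, a cycle, an $r$-regular member of $\CR(\cP)$, or a member of $\cP$ of maximum degree at most $r$.

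I then split on $k$. If $k=1$, the combination $\dcn(G:\cP) = 2$ and $(\cP,\dcn)$-criticality is by definition $G \in \CR(\cP)$, and combined with $r$-regularity of $G$ this is the second exception. If $k \ge 2$, I choose a leaf block $B$ of $G$, meaning a block containing at most one separating vertex of $G$; any $w \in V(B)$ that is not separating in $G$ satisfies $d_B(w) = d_G(w) = rk$. Testing $rk$ against the four possible block types rules out the options ``$r$-regular member of $\CR(\cP)$'' and ``in $\cP$ with $\De \le r$'' (each forces $k \le 1$) and pins $B$ down to $K_{rk+1}$ or to a cycle with $rk = 2$ (forcing $(r,k) = (1,2)$ since $k \ge 2$). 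In either remaining case $B$ is $rk$-regular, so any separating vertex of $G$ lying inside $B$ would already exhaust its degree inside $B$ and could not have a neighbor outside, a contradiction. Hence $B$ contains no separating vertex of $G$ at all, and so $G = B$: either $G = K_{rk+1}$ (the first exception) or $G$ is a cycle with $r = 1$ and $\dcn(G:\cP) = 3$.

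It remains to handle the cycle sub-case. If $\cP = \cO$ this is the third exception. Otherwise $\cP$ properly contains $\cO$, so heredity gives $K_2 \in \cP$ and additivity then puts every matching into $\cP$; a short direct argument then produces, for an arbitrary $2$-cover of the cycle $G$, a transversal whose induced subgraph in $H$ is a matching and hence lies in $\cP$, contradicting $\dcn(G:\cP) = 3$. I expect this last sub-case to be the main obstacle, since the block-structure machinery alone does not rule it out and one must exhibit the matching transversal by hand, using only that $\cP$ is reliable and contains $K_2$.
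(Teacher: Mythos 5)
Your proof is correct and follows essentially the same route as the paper's: reduce to the case $r\mid\De(G)$, pass to a $(\cP,\dcn)$-critical subgraph which by degree-counting must be all of $G$, observe that every vertex is a low vertex, invoke the block characterisation from Corollary~\ref{corol:gallai}(a), and then deduce that $G$ is itself a block before enumerating the four block types. You are in fact slightly more careful than the paper in two places: you dispose of $k=0$ explicitly (the paper silently assumes $k\ge1$ when invoking Corollary~\ref{corol:gallai}(a)), and you split off $k=1$ and settle it directly from the definition $G\in\CR(\cP)\Leftrightarrow G$ is $(\cP,\dcn)$-critical with $\dcn(G:\cP)=2$, whereas the paper's terse assertion ``since $G$ is regular of degree $kr$, $G$ is a block'' is really only forced by the leaf-block degree argument when $k\ge2$, since for $k=1$ a leaf block of type ``$B\in\cP$, $\De(B)\le r$'' need not be regular and is not immediately excluded. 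Your final cycle sub-case is the same as the paper's: show the induced subgraph of the transversal has at most one edge and hence, being a matching with $K_2\in\cP$, lies in $\cP$ by additivity.
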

\begin{proof}
Let $G$ be a connected graph. If $\De(G)$ is not divisible by $r$, then \eqref{equation:brooks} is an immediate consequence of Corollary~\ref{corol:gallai}(b) and we are done. So assume that $\De(G)=kr$ for some integer $k\geq 0$. Then $\dcn(G:\cP)\leq k+1$ (by Corollary~\ref{corol:gallai}(b)). In the case that $\dcn(G:\cP)\leq k$ we are done, too. Hence it remains to consider the case that $\dcn(G:\cP)= k+1$. Then $G$ has an induced subgraph $G'$ such that
$G'$ is $(\cP,\dcn)$-critical and $\dcn(G':\cP)= k+1$ (by Proposition~\ref{prop:critsub}). Then $\de(G')\geq rk$ (by Corollary~\ref{corol:gallai}(b)) and, since $G$ is connected and $\De(G')\leq \De(G)=rk$, we obtain that $G=G'$ and so $G$ is regular of degree $rk$. This implies that the set of low vertices $U=\set{v\in V(G)}{d_G(v)=rk}$ satisfies $U=V(G)=V(G')$ and so $G=G'[U]$.
Since $G$ is a simple graph, it then follows from Theorem~\ref{theorem:lowvertexA1} that $G$ is a complete graph, or $G$ is a cycle, or $G$ is $r$-regular and $G\in \CR(\cP)$, or $G\in \cP$ and $\De(G)\leq r$. Since $G$ is regular of degree $kr$, we conclude that $G$ itself is a block, unless $k=1$. If $k=1$, then $G$ is $r$-regular and $\dcn(G:\cP)=2$, which implies that $G\not\in \cP$. Since $G$ is $(\cP,\dcn)$-critical, $\dcn(G-v:\cP)\leq 1$ for every $v\in V(G)$, and so $G-v\in \cP$ for every $v\in V(G)$. Consequently, $G\in \CR(\cP)$ and we are done. Now assume that $k\not=1$. Then $G$ is a block and so $G$ is a complete graph or a cycle.

If $G$ is a $K_n$, then $n-1=kr$ and we are done.
It remains to consider the case that $G$ is a cycle. Since $G$ is $kr$-regular and $k\not=1$, this implies that $k=2$, $r=1$ and $\dcn(G:\cP)=3$. Since $\cP$ is reliable, $\cO\subseteq \cP$. If $K_2\in \CR(\cP)$ then $\cP=\cO$ (by Proposition~\ref{prop:smooth}(b)(c)) and we are done, too. Otherwise
$K_2\in \cP$, and it is not difficult to show that $\dcn(G:\cP)\leq 2$, a contradiction. Let $(X,H)$ be an arbitrary cover of $G$ such that $|X_v|=2$ for all $v\in V(G)$. It suffices to show that there exists a transversal $T$ of $(X,H)$ such that $H[T]\in \cP$. If $(X,H)$ has an independent transversal, this is obviously true. Otherwise, it follows from \cite[Theorem 2]{Schweser18b} that if $e$ is an edge of $H$, then there exists a transversal $T$ of $(X,H)$ such that $e$ is the only edge of $H[T]$ and so $H[T]\in \cP$.
This completes the proof.
\end{proof}

Note that the above theorem for $\cP=\cO$ implies Brooks' famous theorem \cite{Brooks41} from 1941 saying that any connected simple graph $G$ satisfies $\cn(G)\leq \De(G)$ unless $G$ is a complete graph or an odd cycle (use \eqref{equation:cn<lcn<dcn} and the trivial fact that any even cycle has $\cn=2$).

The next result is an extension of a well known result  about degree choosable graphs due to Erd\H{o}s, Rubin, and Taylor \cite{ErdosRubTay79}, it was independently proved by Oleg Borodin in his thesis (Problems of coloring and of covering the vertex set of a graph by induced subgraphs, Novosibirsk 1979). For $\cP=\cO$, the next result was obtained by  Bernshteyn, Kostochka, and Pron \cite{BernsteynKoPro17}. Note that $K_2$ is the only graph in $\CR(\cO)$ and $K_1$ is the only block belonging to $\cO$.

\begin{theorem}
\label{theorem:ERT}
Let $\cP$ be a reliable graph property with $d(\cP)=r$, let $G$ be a connected graph, and let $(X,H)$ be a cover of $G$ such that $r|X_v|\geq d_G(v)$ for all $v\in V(G)$. If $G$ is not $(\cP,(X,H))$-colorable, then each block $B$ of $G$ is a brick, or $B=tB'$ with $t\in \nat$ such that either $B'\in \CR(\cP)$ and $B'$ is $r$-regular, or $B'\in \cP$ and $\De(B')\leq r$.
\end{theorem}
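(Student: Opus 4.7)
The plan is to reduce the statement to Theorem~\ref{theorem:lowvertexA1} by extracting a $\cP$-critical subcover from the given data and then using the connectedness of $G$ together with the degree hypothesis to force this subcover to live on all of $G$ with every vertex low. Once those two points are established, the block-structure conclusion is immediate from Theorem~\ref{theorem:lowvertexA1}.

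First I would choose an induced subgraph $G' \subseteq G$ that is minimal among those for which the restricted cover $(X',H')$, defined by $X'_v = X_v$ for $v \in V(G')$ and $H' = H[\bigcup_{v \in V(G')} X_v]$, admits no $\cP$-transversal; such a $G'$ exists because $G$ itself has this property by assumption. For every $v \in V(G')$, the minimality yields a $\cP$-transversal $T$ of the cover obtained from $(X',H')$ by removing $v$. Because $T$ is disjoint from $X_v$, the graph $H'[T]$ coincides with this smaller induced subgraph of $H$ and so lies in $\cP$, whence $T$ is a $(\cP,v)$-transversal of $(X',H')$. Consequently $(X',H')$ is a $\cP$-critical cover of $G'$.

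The next step is to show that $G' = G$ and that every vertex of $G$ is a low vertex with respect to $(G,X,H,\cP)$. Proposition~\ref{prop:low+vertex}(a) applied to $(X',H')$ gives $d_{G'}(v) \geq r|X_v|$ for every $v \in V(G')$, while the hypothesis of the theorem yields $r|X_v| \geq d_G(v) \geq d_{G'}(v)$; hence all these quantities coincide, and in particular $d_{G'}(v) = d_G(v)$ for all $v \in V(G')$. If $V(G')$ were a proper subset of $V(G)$, the connectedness of $G$ would supply some $u \in V(G')$ with a neighbor outside $V(G')$, contradicting the equality $d_{G'}(u) = d_G(u)$. Hence $G' = G$, and the identity $d_G(v) = r|X_v|$ now holds for every $v \in V(G)$, so the low vertex subgraph of $G$ with respect to $(G,X,H,\cP)$ is all of $G$.

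Finally, applying Theorem~\ref{theorem:lowvertexA1} to the $\cP$-critical cover $(X,H)$ of $G$ yields the conclusion directly: each block of $G$, being a block of the low vertex subgraph, is a complete graph, a cycle, an $r$-regular element of $\CR(\cP)$, or a member of $\cP$ with $\De(B) \leq r$. The main obstacle is the reduction to the critical setting, where one must carefully verify that the cover inherited by the minimal induced subgraph really is $\cP$-critical and that the tight degree inequality propagates to make every vertex low; after that, the connectedness hypothesis and Theorem~\ref{theorem:lowvertexA1} finish the argument in a single step.
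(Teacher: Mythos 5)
Your proof is correct and takes essentially the same route as the paper: extract a $\cP$-critical restricted cover on a vertex-minimal uncolorable induced subgraph $G'$, apply Proposition~\ref{prop:low+vertex}(a) against the hypothesis $r|X_v|\geq d_G(v)$ to force $d_{G'}(v)=r|X_v|=d_G(v)$, use connectedness of $G$ to conclude $G'=G$ with every vertex low, and then invoke Theorem~\ref{theorem:lowvertexA1}. The only difference is cosmetic: you spell out why the restricted cover is $\cP$-critical, which the paper states without detail.
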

\begin{proof}
By assumption, $(X,H)$ has no $\cP$-transversal. Then there is a vertex set $U\subseteq V(G)$ such that the cover $(X',H')$ of $G'=G-U$ with $H'=H-\bigcup_{u\in U}X_u$ and $X'=X|_{V(G)\sm U}$ is  $\cP$-critical. By Proposition~\ref{prop:low+vertex}, we have $d_{G'}(u)\geq r|X'_u|=r|X_u|\geq d_{G}(u)$ for all $u\in V(G')$. Since $G$ is connected, this implies that $G=G'$ and so $(X,H)$ is a $\cP$-critical cover of $G$. Moreover it follows that $r|X_v|=d_G(v)$, from which we obtain that $V(G,X,H,\cP)=V(G)$, that is, $G$ is its own low vertex subgraph. Then Theorem~\ref{theorem:lowvertexA1} implies the required properties for the blocks of $G$.
\end{proof}

\section{DP-Coloring and variable degeneracy}
\label{sec:degeneracy}

For proving Theorem~\ref{theorem:lowvertexA1}, we shall establish a result (Theorem~\ref{main_theorem}) that combines DP-coloring with variable degeneracy. Let $H$ be a graph and let $f$ be a \DF{vertex function} of $H$, i.e. $f:V(H)\to \nato$. Then $\su(f)=\set{x\in V(H)}{f(x)>0}$ is the \DF{support} of $f$ in $H$, and $\suc(f)=\set{x\in V(H)}{f(x)=0}$ is the \DF{complementary support}
of $f$ in $H$. For a set $X\subseteq V(H)$, define
$$f(X)=\sum_{x\in X}f(x).$$
A subgraph $\tilde{H}$ of $H$ is called \DF{strictly $f$-degenerate} if each non-empty subgraph $H'$ of $\tilde{H}$ contains a vertex $x$ such that $d_{H'}(x)<f(x)$. Note that if a subgraph $\tilde{H}$ of $H$ is a strictly $f$-degenerate, then $V(\tilde{H})\subseteq \su(f)$. The concept of variable degeneracy seems to have been first studied by Borodin, Kostochka, and Toft \cite{BorodinKT00}. DP-colorings with variable degeneracy for simple graphs were introduced by Sittitrai and Nakprasit~\cite{SittitraiNa18} although they use a slightly different approach.

In this section we study the following coloring problem. A \DF{configuration} is a tuple $\cK=
(G,X,H,f)$ such that $G$ is a graph, $(X,H)$ is a cover of $G$, and $f$ is a vertex function of $H$. Given a configuration $\cK=(G,X,H,f)$, we want to decide whether $(X,H)$ has a transversal $T$ such that $H[T]$ is strictly $f$-degenerate. In general, this decision problem is {\NPC}. However, if we add a certain degree condition it might become a polynomial problem.

Let $\cK=(G,X,H,f)$ be a configuration. We call $\cK$  \DF{degree-feasible} if for each vertex $v$ of $G$ we have
$$f(X_v)=\sum_{x \in X_v} f(x) \geq d_G(v).$$
Furthermore, we say that $\cK$ is \DF{colorable} if $(X,H)$ has a transversal $T$ such that $H[T]$ is strictly $f$-degenerate, otherwise $\cK$ is said to be \DF{uncolorable}. If we want to decide whether $\cK$ is colorable, or not, we always assume that $|X_v|=r$ for all $v\in V(G)$ with $r\geq 1$, for otherwise we may add virtual vertices $x$ and put $f(x)=0$. In what follows, we shall use this assumption in order to simplify our description. Our aim is to characterize degree feasible uncolorable configurations.

\begin{figure}[htbp]
\centering
\includegraphics[height=5cm]{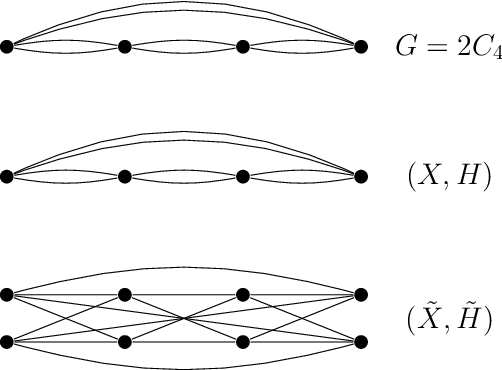}
\hspace{1cm}
\includegraphics[height=5cm]{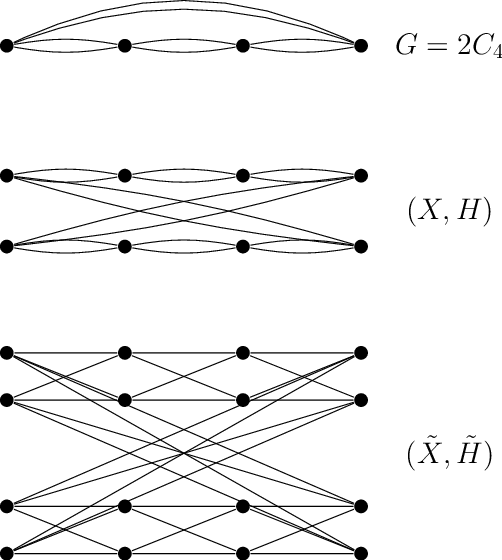}
%
%
\caption{A $G$-saturated cover $(X,H)$ of $G$ and its $2$-inflation $(\tilde{X},\tilde{H})$.}
\label{figure1}       
\end{figure}

First, we need some more notation. Let $G$ be a non-empty graph, and let $(X,H)$ be a cover of $G$. For every edge $uv$ of $G^o$, $H(X_u,X_v)$ is a bipartite graph with parts $X_u$ and $X_v$ and $\De(H(X_u,X_v))\leq \mu_G(u,v)$ (by (C2)). We say that $(X,H)$ is \DF{$G$-saturated} if for every
edge $uv$ of $G^o$, the bipartite graph $H(X_u,X_v)$ is regular of degree $\mu_G(u,v)$, see Fig.~\ref{figure1}. If the cover $(X,H)$ is $G$-saturated and $G$ is connected, then there is an integer $r\in \nat$ such that $|X_v|=r$ for all $v\in V(G)$; in this case we say that $(X,H)$ is an \DF{$r$-uniform} cover of $G$. Let $U\subseteq V(H)$ be an arbitrary set. For $v\in V(G)$, we define $X_v(U)=X_v\cap U$. Let $G'=G[\dom(U:G)]$, let $H'=H[U]$, and let $X':V(G') \to 2^U$ be the map with $X'_v=X_v(U)$ for all $v\in V(G')$. Then $(X',H')$ is a cover of $G'$ and we write $(X',H')=(X,H)/U$, we call this cover a \DF{subcover} of $(X,H)$ \DF{restricted} to $U$. If $\dom(U:G)=V(G)$, then $(X',H')$ is a cover of $G$, in this case we say that $(X',H')$ is a \DF{full subcover} of $(X,H)$ \DF{restricted} to $U$. If $(X',H')=(X,H)/U$ is a full subcover of $(X,H)$ restricted to $U$ and $(X',H')$ is $G$-saturated, then $E_H(U,V(H)\sm U)=\ems$ (by (C1) and (C2)).

Let $G$ be a non-empty graph, let $(X,H)$ be a cover of $G$, and let $s\in \nat$ be an integer such that $\mdo{\mu_H(x,y)}{0}{s}$ for every pair $(x,y)$ of distinct vertices of $H$. We now construct a new cover $(\tilde{X}, \tilde{H})$ of $G$ as follows:
\begin{itemize}
  \item For every vertex $x$ of $H$ let $U_x$ be a set of $s$ new vertices, and let $X_v'=\bigcup_{x\in X_v}U_x$ for all $v\in V(G)$; note that $|X'_v|=s|X_v|$.
  \item For every pair $x, y$ of distinct vertices of $H$, let $\tilde{H}_{x,y}=\tilde{H}[U_x \cup U_y]$ be a copy of the bipartite graph $mK_{s,s}$ with parts $U_x$ and $U_y$ where $m=\mu_H(x,y)/s$. Let $\tilde{H}$ denote the union of all these graphs $\tilde{H}_{x,y}$.
\end{itemize}
It is easy to show that $(\tilde{X}, \tilde{H})$ is a cover of $G$; we call this cover  \DF{$s$-inflation} of $(X,H)$ (see Fig.~\ref{figure1}). By an inflation of $(X,H)$ we mean an $s$-inflation of $(X,H)$ with $s\in \nat$, which only exists if $H=sH'$. Clearly, if $(X,H)$ is $G$-saturated, so is each of its inflations.

Given a graph $G$, we define two special types of covers of $G$. A cover $(X,H)$ of $G$ is called a \DF{$G$-cover} if $X_v=\{x_v\}$ for all $v\in V(G)$ and $\mu_H(x_u,x_v)=\mu_G(u,v)$ for every edge $uv$ of $G^o$. Note that $H$ is a copy of $G$ with $x_v \mapsto v$ as isomorphism, and so $d_H(x_v)=d_G(v)$ for all $v\in V(G)$. Furthermore, $(X,H)$ is $G$-saturated. A cover $(X,H)$ of $G$ is called a \DF{double $G$-cover} if $G=tC_n$ with $n\geq 3$ and $t\in \nat$ for a cycle $C_n=(v_1,v_2, \ldots,v_n, v_1)$, $X_{v_i}$ ($i\in [1,n]$) is a set of two vertices, say $x_i, x_{n+i}$, and $\mu_H(x_i,x_j)=t$ if $\mdo{i-j}{1}{2n}$ else $\mu_H(x_i,x_j)=0$. Clearly, $(X,H)$ is $G$-saturated. Note that the cover $(X,H)$ shown on the right side of Fig.~\ref{figure1} is a double $G$-cover for $G=2C_4$.

\smallskip

Next we recursively define the family of constructible configurations. A uniform configuration $(G,X,H,f)$ is called \DF{constructible} if one of the following five conditions hold:

\begin{description}
\item[{\rm (K1)}] $(G,X,H,f)$ is an \upshape{M}-\DF{configuration}, that is, $G$ is a block and there exists a set $U\subseteq V(H)$ such that $(X,H)/U$ is an inflation of a $G$-cover, and, for $v\in V(G)$ and $x\in X_v$,we have $f(x)=d_G(v)/|X_v(U)|$ if $x\in U$ else $f(x)=0$. The set $U$ is called \DF{layer} of $(X,H)$.
    \\\\
    {\bf Remark 1} Note that the cover $(X,H)/U$ is a full subcover of $(X,H)$ that is $G$-saturated, and, for $v\in V(G)$, we have $f(X_v)=f(X_v(U))=d_G(v)$. Hence any M-configuration is  degree feasible.

\item[{\rm (K2)}] $(G,X,H,f)$ is a \upshape{K}-\DF{configuration}, that is, $G=tK_n$ with $t,n\in \nat$ and there are integers $n_1,n_2,\ldots,n_p\in \nat$ with $p \geq 1$ such that $n_1 + n_2 + \ldots + n_p=n-1$. Moreover, there are $p$ disjoint subsets $U_1, U_2, \ldots, U_p$ of $V(H)$ such that $(X,H)/U_i$ is an inflation of a $G$-cover for $i\in [1,p]$, and, for $v\in V(H)$ and $x\in X_v$, we have $f(x)=(tn_i)/|U_i(X_v)|$ if $x\in U_i$ for $i\in [1,p]$ else $f(x)=0$. The set $U_i$ is said to be a \DF{layer} of $(G,X,H,f)$ of \DF{type} $n_i$ ($i\in [1,p]$).
    \\\\
    {\bf Remark 2} Note that the cover $(X,H)/U_i$ is $G$-saturated for $i\in [1,p]$, and, for $v\in V(G)$, we have $f(X_v(U_i))=tn_i$ and so $f(X_v)=t(n-1)=d_G(v)$. Hence any K-configuration is degree feasible. Furthermore, a K-configuration with $p=1$ is also an M-configuration.

\item[{\rm (K3)}] $(G,X,H,f)$ is an \DF{odd} \upshape{C}-\DF{configuration}, that is, $G=tC_n$ with $t,n\in \nat$ and $n\geq 3$ odd. Moreover, there are two disjoint subsets $U_1, U_2$ of $V(H)$ such that $(X,H)/U_i$ is an inflation of a $G$-cover for $i\in\{1,2\}$, and,
    for $v\in V(G)$ and $x\in X_v$, we have $f(x)=t/|U_i(X_v)|$ if $x\in U_i$ for $i\in \{1,2\}$ else $f(x)=0$.
    \\\\
    {\bf Remark 3} Note that the cover $(X,H)/U_i$ is $G$-saturated for $i\in \{1,2\}$, and, for $v\in V(G)$, we have $f(X_v(U_i))=t$ and so $f(X_v)=2t=d_G(v)$. Hence any odd C-configuration is  degree feasible. Furthermore, if $n=3$ then $(G,X,H,f)$ is also a K-configuration.

\item[{\rm (K4)}] $(G,X,H,f)$ is an \DF{even} \upshape{C}-\DF{configuration}, that is, $G=tC_n$ with $t,n\in \nat$ and $n\geq 4$ even. Moreover, there is a subset $U$ of $H$ such that $(X,H)/U$ is an inflation of a double $G$-cover, and, for $v\in V(G)$ and $x\in X_v$, we have $f(x)=2t/|U(X_v)|$ if $x\in U$ else $f(x)=0$.
    \\\\
    {\bf Remark 4} Note that the cover $(X,H)/U$ is $G$-saturated, and, for $v\in V(G)$, we have $f(X_v)=f(X_v(U))=2t=d_G(v)$. Hence any even C-configuration is  degree feasible.

\item[{\rm (K5)}] There are two disjoint constructible configurations, say  $(G^1,X^1,H^1,f^1)$ and $(G^2,X^2,H^2,f^2)$, such that $G$ is obtained from the disjoint graphs $G^1$ and $G^2$ by identifying a vertex $v^1 \in V(G^1)$ and a vertex $v^2 \in V(G^2)$ to a new vertex $v^*$, $H$ is obtained from the disjoint graphs $H^1$ and $H^2$ by choosing a bijection $\pi$ from $X_{v^1}$ to $X_{v^2}$ and identifying each vertex $x \in X_{v_1}$ with $\pi(x)$ to a vertex $x^*$, and $f$ is defined as
$$f(y)=\begin{cases}
f^1(y) & \text{if } y \in V(H^1) \setminus X_{v^1},\\
f^2(y) & \text{if } y \in V(H^2) \setminus X_{v^2},\\
f^1(x) + f^2(\pi(x)) &\text{if } y \text{ is obtained from the identification of } x \in X_{v^1} \\ &\text{ with } \pi(x) \in X_{v^2}.
\end{cases}$$
In this case, we say that $(G,X,H,f)$ is obtained from $(G^1,X^1,H^1,f^1)$ and $(G^2,X^2,H^2,f^2)$ by \DF{merging} $v^1$ and $v^2$ to $v^*$.
\end{description}

By a \upshape{C}-\DF{configuration} we mean either an odd or an even \upshape{C}-configuration. The next result characterizes uncolorable degree feasible configurations whose underlying graph is connected.

\begin{theorem} \label{main_theorem}
Let $G$ be a connected  graph, let $(X,H)$ be a cover of $G$, and let $f:V(H) \to \mathbb{N}_0$ be a function. Then, $(G,X,H,f)$ is an uncolorable degree-feasible configuration if and only if $(G,X,H,f)$ is constructible.
\end{theorem}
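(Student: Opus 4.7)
The plan is to prove both directions separately: the forward direction is a verification per building block, while the reverse is an induction whose crux is the block case. For the forward direction, an (M)-configuration is degree-feasible with equality, and any transversal avoiding $f=0$ vertices must coincide with the embedded copy of $G$ in $H$; since every $x \in T$ then satisfies $d_{H[T]}(x) = f(x) = d_G(v)$, the subgraph $H' = H[T]$ itself witnesses failure of strict $f$-degeneracy. For a (K)-configuration, a transversal avoiding zeros defines a map $i \colon V(G) \to [1,p]$ on the complete graph $G$, and since $\sum t_c = |G|-1$, pigeonhole yields a class $c$ with $|i^{-1}(c)| \geq t_c+1$; this produces a clique of size $t_c+1$ inside $H_c$ with every vertex of degree $\geq t_c$, obstructing strict $f$-degeneracy. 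A (C)-configuration forces strict $1$-degeneracy of $H[T]$, which is equivalent to $H[T]$ being edgeless; this demands a proper $2$-coloring of $G$ compatible with the matching structure, impossible for an odd cycle and impossible for the twisted doubled cycle over an even cycle. For a merging (case (6)), if a strictly $f$-degenerate transversal $T$ of the merged cover existed, a witnessing ordering $y_1, \ldots, y_k$ would satisfy $a + b < f^1(x) + f^2(\varphi(x))$ at $x^* = y_j$, where $a$ and $b$ count back-edges inside $H^1$ and $H^2$ respectively; hence $a < f^1(x)$ or $b < f^2(\varphi(x))$, yielding a strictly $f^i$-degenerate transversal of one side and contradicting the inductive uncolorability hypothesis.

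For the reverse direction I would run strong induction on $|V(G)|$, refined by $\sum_{x \in V(H)} f(x)$ as a secondary parameter. If $G$ has a separating vertex $v^*$ with decomposition $G = G^1 \cup G^2$ sharing only $v^*$, then for each $x \in X_{v^*}$ we split $f(x) = f^1(x) + f^2(x)$ and consider the restricted configurations on each side. A counting argument using $d_G(v^*) = d_{G^1}(v^*) + d_{G^2}(v^*)$, combined with uncolorability of the whole, shows that some such splitting leaves both sides uncolorable and degree-feasible (otherwise one could paste colorings from both sides into a coloring of the merge). The induction then produces constructible descriptions on both pieces, and case (6) directly reassembles $(G, X, H, f)$. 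This reduces everything to the situation where $G$ is a block of order $\ge 2$ (the cases $|V(G)| \le 2$ are handled directly).

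The main obstacle is the block case: showing that every uncolorable degree-feasible configuration on a $2$-connected $G$ has the form (M), (K), or (C). Passing to a configuration minimal under the induction ordering yields strong rigidity --- no $f(x)$ can be decreased without destroying either feasibility or uncolorability --- which forces the positive-$f$ support inside each $X_v$ to be concentrated in a way dictated by $d_G(v)$. The classification then proceeds by an exchange argument generalizing Erd\H{o}s--Rubin--Taylor and Borodin--Kostochka--Toft to DP-covers with variable degeneracy: if $G$ is neither complete nor a cycle, $2$-connectivity yields a pair $u, v$ joined by two internally disjoint paths of different lengths, along which one can ``flip'' a partial transversal through the matchings $M_e$ of $H$, reducing the back-edge count at some $x$ below $f(x)$ and producing a strictly $f$-degenerate transversal. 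The genuine difficulty over the classical list-coloring proofs is that the matchings $M_e$ replace the simple same-colour identification, so the flip must respect arbitrary bijections, and variable $f$-values must be tracked along the flip; managing these uniformly will likely require the secondary induction on $\sum_x f(x)$ together with a careful separate verification that the rigid residual cases left after minimality are precisely the (M), (K), and (C) configurations.
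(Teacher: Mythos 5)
Your high-level architecture matches the paper's: verify each building rule for the ``if'' direction, then for the ``only if'' direction induct on $|G|$, split at a separating vertex (which is exactly rule (6)), and reduce to the case that $G$ is a block. The ``if'' direction goes through as you describe and is, up to rephrasing, the paper's argument; your pigeonhole for (K) and back-edge accounting for (6) are variant formulations of the same counts. Your treatment of the separating-vertex case is also essentially the paper's Claim~1: remove $v^*$, take a strictly $f$-degenerate transversal $T$ of the remainder, and define $f^i(x)$ at $x\in X_{v^*}$ as the number of $T$-neighbors of $x$ on side $i$; uncolorability of both sides then follows because a colorable side would paste back.

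The gap is the block case. You propose an exchange or ``flip'' along a theta subgraph guaranteed by $2$-connectivity when $G$ is neither complete nor a cycle. That is the classical Brooks / Erd\H{o}s--Rubin--Taylor route, and you correctly flag the two obstructions --- the matchings $M_e$ need not align around a cycle, and the thresholds $f(x)$ vary --- but you do not actually resolve them, and I do not think they resolve easily: after a flip around a cycle of $G$, whether the back-edge count at a given $x$ goes up or down depends on whether the corresponding lift in $H$ closes up to one cycle or a doubled cycle, and with variable $f$ there is no single ``free'' vertex to absorb the slack. The paper (following Borodin--Kostochka--Toft) avoids the flip entirely. Its engine is a single-vertex reduction: for a non-separating $v$ and any $x_v\in X_v$ with $f(x_v)>0$, pass to the configuration on $G-v$ with $H'=H-X_v$ and $f'(x)=\max\{0,f(x)-1\}$ on $N_H(x_v)$; this preserves degree-feasibility and uncolorability. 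Applying the inductive hypothesis to these reductions (for two distinct $x,x'\in X_v$, both with $f>0$) pins down the structure of the reduced configurations, and a Gallai-style end-block count on $G-v$, together with a compatibility claim about which end-block vertices of $G-v$ are adjacent to $v$, forces $G$ to be complete or a cycle; the residual rigid cases are then checked to be (K), (M), or (C). If you want to push your plan through, it is this reduction lemma, not a flip lemma, that you should build and prove; as written, the paragraph asserting the flip ``will likely require \dots careful verification'' is exactly where the proof is missing.
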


For the class of simple graphs, Theorem~\ref{main_theorem}, formulated in a slightly different terminology, was obtained by F. Lu, Q. Wang and T. Wang, see \cite{Lu2Wang}. For covers $(X,H)$ associated with constant list assignments of $G$, Theorem~\ref{main_theorem} is a reformulation of a result that was obtained in 2000 by Borodin, Kostochka, and Toft
\cite[Theorem 8]{BorodinKT00} for simple graphs and extended in 2021 by Schweser and Stiebitz
\cite[Theorem 2]{SchweserS2021} to graphs and hypergraphs. The proof of Theorem~\ref{main_theorem} resembles the proofs given in \cite{BorodinKT00} and \cite{SchweserS2021}; the proof is done via a sequence of five propositions and one theorem.

Recall that for a graph $G$, we denote by $\cB(G)$ the set of blocks of $G$. Furthermore, for $v\in V(G)$, let $\cB_v(G)=\set{B\in \cB(G)}{v\in V(B)}$. Note that two blocks of $G$ have at most one vertex in common; and a vertex $v$ belongs to $S(G)$ if and only if $v$ belongs to more than one block of $G$. The blocks of $G$ form a tree-like structure, and a block $B$ of $G$ is called an \DF{end-block} of $G$ if $|S(G)\cap V(B)|=1$. Any connected graph $G$ is either block or has at least two end-blocks.

Next, we want to describe the block structure of a constructible configuration. First, we need some notation. Let $G$ be a connected graph, let $(X,H)$ be a cover of $G$, and let $B$ be a non-empty induced subgraph of $G$. Then we denote by $X^B$ the restriction of the map $X$ to $V(B)$, and by $H^B$ we denote the subgraph of $H$ induced by the vertex set $U=\bigcup_{v\in V(B)}X_v$. Clearly, $(X^B,H^B)$ is a cover of $B$, and $(X^B,H^B)=(X,H)/U$. In particular, $X^B_v=X_v$ for every vertex $v$ of $B$, and $H^B(X_u,X_v)=H(X_u,X_v)$ for every edge $uv$ of $B^o$. The proof of the following proposition can be easily done by induction on the number of blocks of the graph $G$. The after next proposition is an immediate consequence; note that an edge of $G$ belongs to exactly one block of $G$.

\begin{proposition} \label{prop_construtible-blocks}
Let $\cK=(G,X,H,f)$ be a constructible configuration. Then for every $B\in \cB(G)$, there is a unique function $f^B$, such that $\cK^B=(B,X^B,H^B,f^B)$ is an \upshape{M}-, \upshape{K}- or \upshape{C}-configuration, and, for $v \in V(G)$ and $x \in X_v$, we have
$$f(x)=\sum_{B \in \cB_v(G)} f^B(x)$$
In what follows, we call $f^B$ the \DF{$B$-part} of the function $f$.
\end{proposition}

\begin{proposition} \label{prop_construtible-blocksB}
Let $\cK=(G,X,H,f)$ be a constructible configuration, let $B\in \cB(G)$, let $f^B$ be the $B$-part of $f$, and let $\cK^B=(B,X^B,H^B,f^B)$. Suppose that there is a pair $u, v$ of distinct vertices of $B$ such that $H(X_u,X_v)$ has only one component. Then $\cK^B$ is an M-configuration, and if $u, v$ are distinct vertices of $B$ with $\mu_H(u,v)=m>0$, then $H(X_u,X_v)$ is a $tK_{s,s}$ with $m=ts$ for $s,t\in \nat$.
\end{proposition}

The next proposition proves the ``if''-direction of Theorem~\ref{main_theorem}.

\begin{proposition} \label{prop_constructible}
Let $(G,X,H,f)$ be a constructible configuration. Then the following statements hold:
\begin{itemize}
\item[\upshape (a)] $f(X_v) = d_G(v)$ for all $v \in V(G)$.
\item[\upshape (b)] $(G,X,H,f)$ is uncolorable.
\end{itemize}
\end{proposition}

\begin{proof}
Statements (a) holds if $G$ is a block (see Remarks 1 - 4); for arbitrary connected graphs $G$ it then easily follows from Proposition~\ref{prop_construtible-blocks}. The proof of (b) is by reductio ad absurdum. Then we may choose a configuration $\cK=(G,X,H,f)$ such that
\begin{itemize}
\item[(1)] $\cK$ is constructible,
\item[(2)] $\cK$ is colorable, i.e., there is a transversal $T$ of $(X,H)$ such that $H[T]$ is strictly $f$-degenerate, and
\item[(3)] $|G|$ is minimum subject to (1) and (2).
\end{itemize}
Note that if $f(x)=0$ for some vertex $x \in V(H)$, then $H[\{x\}]$ is not strictly $f$-degenerate and, hence, $x$ cannot be contained in any strictly $f$-degenerate subgraph of $H$.

First, assume that $(G,X,H,f)$ is an \upshape{M}-configuration. Then $G$ is a block and there exists a set $U\subseteq V(H)$ such that $(X,H)/U$ is an $s$-inflation of a $G$-cover of $G$. Let $v\in V(G)$ and $x\in X_v$. Then $s=|X_v(U)|$, and $f(x)=d_G(v)/s$ if $x\in U$ else $f(x)=0$. Consequently, $T\subseteq U$ and $d_{H[T]}(x)=d_G(v)/s=f(x)$ if $x\in T$, a contradiction.

Next assume that $(G,X,H,f)$ is a \upshape{K}-configuration. Then $G=tK_n$ with $t,n\in \nat$ and there are integers $n_1,n_2,\ldots,n_p \in \nat$ with $p \geq 1$ such that $n_1 + n_2 + \ldots + n_p=n-1$. Moreover, there are $p$ disjoint subsets $U_1, U_2, \ldots, U_p$ of $V(H)$ such that $(X,H)/U_i$ is an inflation of a $G$-cover for $i\in [1,p]$, and for $v\in V(H)$ and $x\in X_v$ we have $f(x)=(tn_i)/|U_i(X_v)|$ if $x\in U_i$ for $i\in [1,p]$ else $f(x)=0$. Then $T$ is a subset of $U_1 \cup U_2 \cup \cdots \cup U_p$. Since $|T|=n$, there is an $i\in [1,p]$, such that $m=|T\cap U_i|\geq n_i+1$. If $(X,H)/U_i$ is an $s$-inflation of $G$, then $s=|U_i(X_v)|$ for all $v\in V(G)$ and $H'=H[T\cap U_i]$ is a $(t/s)K_m$ implying that $d_H'(x)= (t/s)(m-1)\geq (t/s)n_i=f(x)$ for every $x\in V(H')$, a contradiction. If $(G,X,H,f)$ is a \upshape{C}-configuration, we may argue similarly to get a contradiction.

To complete the proof, it remains to consider the case that $\cK=(G,X,H,f)$ is obtained from two constructible configurations $\cK^1=(G^1,X^1,H^1,f^1)$ and $\cK^2=(G^2,X^2,H^2,f^2)$ by merging $v^1 \in V(G^1)$ and $v^2 \in V(G^2)$ to a new vertex $v^*$. To simplify the proof, we assume that $v^1=v^2=v^*$ and $X_{v^*}^1=X_{v^*}^2$ (i.e., $\pi=id$). Since $|G|$ was chosen minimal with respect to (1) and (2), we conclude that $\cK^i$ is uncolorable for $i \in \{1,2\}$. By (2), $(X,H)$ has a transversal $T$ such that $H[T]$ is strictly $f$-degenerate. Let $T^i=T\cap V(H^i)$ ($i\in \{1,2\}$) and let $x_{v^*}$ be the unique vertex from $X_{v^*}\cap T$. Then $T^1\cap T^2=\{x_{v^*}\}$, and $f(x_{v^*})=f^1(x_{v^*}) + f^2(x_{v^*})$ and $f(x)=f^i(x)$ for all $x \in T^i \sm \{x_{v^*}\}$ ($i \in \{1,2\}$). Since $\cK^i$ is uncolorable, the subgraph $H[T^i]$ is not strictly $f^i$-degenerate implying that there is a subgraph $\tilde{H^i}$ of $H[T^i]$ such that $d_{\tilde{H^i}}(x) \geq f^i(x)$ for all $x \in V(\tilde{H^i})$ ($i\in \{1,2\}$). If $x_{v^*}$ does not belong to $\tilde{H^i}$, then $\tilde{H^i}$ is a subgraph of $H[T]-x_{v^*}$ and so
$\tilde{H^i}$ is strictly $f^i$-degenerate as $H[T]$ is strictly $f$-degenerate, which is impossible. Hence $x_{v^*}$ belongs to $\tilde{H^i}$ (for $i\in \{1,2\})$. Then $\tilde{H}=\tilde{H^1} \cup \tilde{H^2}$ is a subgraph of $H[T]$. Let $x$ be an arbitrary vertex of $\tilde{H}$. If $x\not=x_{v^*}$, then $x$ belongs to $\tilde{H^i}-x_{v^*}$ for some $i\in \{1,2\}$, and so $d_{\tilde{H}}(x)=d_{\tilde{H^i}}(x) \geq f^i(x) = f(x)$. Furthermore, we have
$$d_{\tilde{H}}(x_{v^*})=d_{\tilde{H^1}}(x_{v^*}) + d_{\tilde{H^2}}(x_{v^*}) \geq f^1(x_{v^*}) + f^2(x_{v^*}) = f(x_{v^*}).$$ Hence, $\tilde{H} \subseteq H[T]$ is not strictly $f$-degenerate and so $H[T]$ is not strictly $f$-degenerate, as well, a contradiction.
\end{proof}

As a consequence of the above proposition, it only remains to show that each uncolorable degree-feasible configuration is constructible. To this end, we apply the following reduction method.

\begin{proposition}[Reduction]\label{prop_reduction}
Let $\cK=(G,X,H,f)$ be a configuration, let $v \in V(G) \setminus S(G)$ and $x_v \in X_v$ such that $f(x_v)>0$. Moreover, let $\cK'=(G',X',H',f')$ be the tuple with $G'=G-v$, $(X',H')=(X^{G'},H^{G'})$, and
$$f'(x)=\max\{0,f(x)-\mu_H(x,x_v)\}$$
for all $x\in V(H')$.
Then, $\cK'$ is a configuration and the following statements hold:
\begin{itemize}
\item[\upshape (a)] If $\cK$ is degree-feasible, then so is $\cK'$.
\item[\upshape (b)] If $\cK$ is uncolorable, then so is $\cK'$.
\end{itemize}
In the following, we write $\cK'=\cK/(v,x_v)$.
\end{proposition}
\begin{proof}
That $\cK'$ is a configuration is evident, note that $H'=H-X_v$. For the proof of (a) assume that $\cK$ is degree-feasible. Let $u$ be an arbitrary vertex of $G'$. Then we have
$$d_G(u)-d_{G'}(u)=\mu_G(u,v)\geq \sum_{x\in X_u}\mu_H(x,x_v),$$
where the last inequality follows from the fact that both sets $X_u$ and $X_v$ are independent and $E_H(X_u,X_v)$ is a union of $\mu_G(u,v)$ matchings (by (C2)). Then we conclude that
$$\sum_{x\in X_u}f'(x)\geq \sum_{x\in X_u}(f(x)-\mu_H(x,x_v))\geq d_G(u)-\sum_{x\in X_u}\mu_H(x,x_v)\geq d_{G'}(u).$$
Hence $\cK'$ is degree-feasible and (a) is proved. For the proof of (b) assume that $\cK'$ is colorable. Then there is a transversal $T'$ of $(X',H')$ such that $H'[T']$ is strictly $f'$-degenerate. Clearly, $T=T' \cup \{x_v\}$ is a transversal of $(X,H)$. Let $\tilde{H}$ be a non-empty subgraph of $H[T]$. We claim that there is a vertex $x$ in $\tilde{H}$ such that $d_{\tilde{H}}(x)<f(x)$. If $x_v$ is the only vertex of $\tilde{H}$, then $f(x_v)>0=d_{\tilde{H}}(x_v)$ and we are done. Otherwise, $\tilde{H}$ contains a vertex $x\not=x_v$ such that $d_{H[T']}(x)<f'(x)$ since $H'[T']$ is strictly $f'$-degenerate. Then $f'(x)=f(x)-\mu_H(x,x_v)$ and so
$$d_{\tilde{H}}(x)\leq d_{H[T]}(x)+\mu_H(x,x_v)<f'(x)+\mu_H(x,x_v)= f(x)$$
This proves that $H[T]$ is strictly $f$-degenerate. Consequently, $\cK$ is colorable and (b) is proved.
\end{proof}

By using the reduction method, we obtain the following useful properties of uncolorable degree-feasible configurations.

\begin{proposition}\label{prop_uncolorable-facts}
Let $G$ be a connected graph and let $(G,X,H,f)$ be an uncolorable degree-feasible configuration. Then, the following statements hold:
\begin{itemize}
\item[\upshape(a)] $f(X_u)=d_G(u)$ for all $u \in V(G)$.
\item[\upshape(b)] If $v\in V(G)\sm S(G)$ and $x_v \in X_v$ with $f(x_v) > 0$, then, for every $u\in V(G-v)$, we have $f(y)\geq \mu_H(y,x_w)$ for all $y\in X_u$ and $\mu_G(v,u)= \sum_{x\in X_v}\mu_H(y,x_v)$.
\item[\upshape(c)] If $|G| \geq 2$ and if $u$ is an arbitrary vertex of $G$, then there is a partial transversal $T$ of $(X,H)$ such that $\dom(T:H)=V(G-u)$ and $H[T]$ is strictly $f$-degenerate, and, for every such transversal $T$ and every vertex $x\in X_u$, we have $f(x)=d_{H[T\cup \{x\}]}(x)$.
\end{itemize}

\end{proposition}
\begin{proof}
The proof of (a) is by induction on the order of $G$. If $|G|=1$ then $\suc(f)=V(H)$ and the statement is obvious. Suppose $|G| \geq 2$ and let $u \in V(G)$ be an arbitrary vertex. Since $G$ is connected, there is a non-separating vertex $v \neq u$ in $G$ and, since $f(X_v) \geq d_G(v) \geq 1$, there is at least one vertex $x_v \in X_v$ with $f(x_v)>0$. By Proposition~\ref{prop_reduction}, $(G',X',H',f')=(G,X,H,f)/(v,x_v)$ is an uncolorable degree-feasible configuration, where $G'=G-v$ and $X'_w=X_w$ for all vertices $w$ of $G'$.
By applying the induction hypothesis, we obtain
$$f'(X_u)=f'(X'_u)=d_G'(u).$$
Since $G'=G-v$, we have
$$d_G(u)-d_{G'}(u)=\mu_G(v,v)\geq \sum_{y\in X_u}\mu_H(y,x_v),$$
where the last inequality follows from (C2).
Furthermore, $f'(y)\geq f(y)-\mu_H(y,x_v)$ for all $y\in X_u$. Consequently, we obtain that
$$d_{G'}(u)=\sum_{y\in X_u}f'(y)\geq \sum_{y\in X_u}(f(y)-\mu_H(y,x_v))\geq d_G(u)-\sum_{y\in X_u}\mu_H(y,x_v)\geq d_{G'}(u),$$
which implies that $\sum_{y\in X_u}f(y)=d_G(u)$, $f'(y)=f(y)-\mu_H(y,x_v)\geq 0$ for all $y\in X_u$ and $\mu_G(v,u)= \sum_{y\in X_u}\mu_H(y,x_v)$.
This proves (a) and (b).

For the proof of (c), assume that $|G| \geq 2$ and $u \in V(G)$. Let $G'=G-u$, let $X'$ be the restriction of $X$ to $V(G')$, and let $H'=H-X_u$. Then $(G',X',H',f)$ is a degree-feasible configuration. Furthermore, each component of $G'$ contains a vertex $v\in N_G(u)$ and we then obtain from (a) that
$$f(X_v)= d_G(v) > d_{G'}(v).$$
Then it follows from (a) applied to each component of $G'$ that $(X',H')$ has a transversal $T$ such that $H'[T]$ is strictly $f$-degenerate. Then $T$ is a partial transversal  of $(X,H)$ such that $\dom(T:H)=V(G')$ and $H[T]=H'[T]$ is strictly $f$-degenerate. Now let $T$ be such a partial transversal. Since $(G,X,H,f)$ is uncolorable, for each $x \in X_u$, $H[T \cup \{x\}]$ contains a non-empty subgraph $H_x$ such that $f(y) \leq d_{H_x}(y)$ for all $y \in V(H_x)$. Clearly, $H_x$ contains $x$ and so $f(x) \leq d_{H_x}(x) \leq d_{H[T\cup\{x\}]}(x)$. For a vertex $y\in T$, let $v=v_y$ be the unique vertex of $G'$ such that $y\in X_v$. Using (a) and (C2), we then obtain that
\begin{eqnarray*}
d_G(u)&=&\sum_{x\in X_u}f(x)\leq \sum_{x\in X_u}d_{H[T\cup\{x\}]}(x)=\sum_{x\in X_u}\sum_{y\in T}\mu_H(x,y)\\
&=& \sum_{y\in T}\sum_{x\in X_u}\mu_H(x,y)\leq \sum_{y\in T}\mu_G(u,v_y)=\sum_{v\in V(G')}\mu_G(u,v)=d_G(u).
\end{eqnarray*}
This obviously implies that $f(x)=d_{H[T\cup \{x\}]}(x)$ for all $x\in X_u$. This proves (c).
\end{proof}

Let $\cK=(G,X,H,f)$ be a configuration. Then we call $G$ the \DF{fundamental graph} of $\cK$, $(X,H)$ the \DF{cover} of $\cK$, and $f$ the \DF{function} of $\cK$. The next theorem proves the ``only if''-direction of Theorem~\ref{main_theorem}.

\begin{theorem} \label{theorem:main2}
If $\cK$ is an uncolorable degree-feasible configuration whose fundamental graph is connected, then $\cK$ is colorable.
\end{theorem}

\begin{proof}
The proof is by reductio ad absurdum. Let $\cK=(G,X,H,f)$ be a minimal counter-example, that is, $G$ is a connected graph such that
\begin{itemize}
\item[(A)] $\cK$ is an uncolorable degree-feasible configuration,
\item[(B)] $\cK$ is not constructible, and
\item[(C)] $|G|$ is minimum subject to (A) and (B).
\end{itemize}
By Proposition~\ref{prop_uncolorable-facts}(a) we have
\begin{align}\label{eq_degree-sum}
f(X_v) = d_G(v) \mbox{ for all } v \in V(G).
\end{align}
Clearly, $|G| \geq 2$, as for $|G|=1$ we have $V(G)=\{v\}$ and $f(x)=0$ for all $x \in X_v$ implying that $(G,X,H,f)$ is an M-configuration and hence constructible, a contradiction to (B). We reach a contradiction via a sequence of twelve claims.

\begin{claim} \label{claim_block}
$G$ is a block.
\end{claim}
\begin{proof2}
Otherwise, $G$ is the union of two connected graphs $G^1$ and $G^2$ such that $V(G^1)\cap V(G^2)=\{v^*\}$ and $|G^i| < |G|$ for $i \in \{1,2\}$. For $i\in\{1,2\}$, let $(X^i,H^i)=(X,H)/V(G^i)$ which is a cover of $G^i$. We now define a vertex function $f^i$ of $H^i$ as follows. By Proposition~\ref{prop_uncolorable-facts}(c), $(X,H)$ has a partial transversal $T$ such that $\dom(T:G)=V(G-v^*)$ and $H[T]$ is strictly $f$-degenerate. Let $T_1=T \cap V(G^1)$ and let $T_2= T \cap V(G^2)$. Then $H[T]$ is the disjoint union of $H[T_1]$ and $H[T_2]$. Then, using Proposition~\ref{prop_uncolorable-facts}(c), we obtain that
$$f(x)=d_{H[T \cup \{x\}]}(x)=d_{H[T_1 \cup \{x\}]}(x) + d_{H[T_2 \cup \{x\}]}(x)$$
for all $x \in X_{v^*}$, and we set $f^i(x)=d_{H[T_i \cup \{x\}]}(x)$ for $i \in \{1,2\}$ and $x \in X_{v^*}$. For a vertex $v$ of $G^i-v^*$, let $f^i(x)=f(x)$ for all $x\in X_v$. Clearly, $\cK^i=(G^i,X^i,H^i,f^i)$ is a configuration for $i\in \{1,2\}$.

First, we claim that $\cK^i$ is uncolorable for $i\in \{1,2\}$. For otherwise, by symmetry, we may assume that $\cK^1$ is colorable. Then, there is a transversal $T^1$ of $(X^1,H^1)$ such that $H^1[T^1]$ is strictly $f^1$-degenerate. Clearly, $T=T^1 \cup T_2$ is a transversal of $(X,H)$, and we claim that $H[T]$ is strictly $f$-degenerate. Otherwise, there is a subgraph $\tilde{H}$ of $H[T]$ with $d_{\tilde{H}}(x) \geq f(x)$ for all $x \in V(\tilde{H})$. Since $H[T_2]$ is strictly $f$-degenerate, $\tilde{H}$ contains vertices of $T^1$. Since $H[T^1]$ is strictly $f^1$-degenerate, there is a vertex $y\in V(\tilde{H})\cap T^1$ such that
$d_{\tilde{H}-T_2}(y) < f^1(y)$. If $y \not \in X_{v^*}$, then
$$f(y)\leq d_{\tilde{H}}(y)= d_{\tilde{H}-T_2}(y) < f^1(y) = f(y),$$
which is impossible. If $y \in X_{v^*}$, then $f^2(y)=d_{H[T_2 \cup \{y\}]}(y)$ and we obtain that %
$$f(y)\leq d_{\tilde{H}}(y)= d_{\tilde{H}-T_2}(y) + d_{\tilde{H}[T_2\cup \{y\}]}(y) < f^1(y) + f^2(y) = f(y),$$
which is impossible, too. Hence, $H[T]$ is strictly $f$-degenerate and so $\cK$ is colorable, a contradiction to (A). This shows that $\cK^i$ is uncolorable for $i\in \{1,2\}$, as claimed.

Next, we claim that $\cK^i$ is degree-feasible for $i\in \{1,2\}$. By \eqref{eq_degree-sum} and the definition of $f^i$, we obtain that $f^i(X_v)=f(X_v)=d_G(v)=d_{G^i}(v)$
for all $v \in V(G^i-v^*)$. Moreover, we have
\begin{align}\label{eq_degree_feasible}
d_G(v^*)  = f(X_{v^*}) =f^1(X_{v^*}) + f^2(X_{v^*})=d_{G^1}(v^*) + d_{G^2}(v^*).
\end{align}
Since $\cK^i$ is uncolorable, it follows from Proposition~\ref{prop_uncolorable-facts}(a) that
$f^i(X_{v^*})\leq d_{G^i}(v^*)$
for $i\in \{1,2\}$. By \eqref{eq_degree_feasible}, this implies that
$f^i(X_{v^*})= d_{G^i}(v^*)$
for $i\in \{1,2\}$. Consequently, $\cK^i$ is degree-feasible for $i\in \{1,2\}$.

Since $\cK=(G,X,H,f)$ is a minimal counter-example and $|G^i|<|G|$ for $i\in \{1,2\}$, we conclude that $\cK^i=(G^i,X^i,H^i,f^i)$ is a constructible configuration, and so $\cK$ is obtained from the constructible configurations $\cK^1$ and $\cK^2$ by merging two vertices to $v^*$. Hence, $\cK$ is a constructible configuration, a contradiction to (B).
\end{proof2}

\begin{claim} \label{claim_reduction}
Let $v\in V(G)$ and let $x_v\in X_v$ such that $f(x_v)>0$. Then the configuration $\cK'=\cK/(v,x_v)$ is constructible. If $f'$ is the function of $\cK'$, then for every vertex $u$ of $G-v$, we have
\begin{itemize}
\item[\upshape(a)] $f'(y)=f(y)-\mu_H(y,x_v)\geq 0$ for all $y\in X_u$, and
\item[\upshape(b)] $\mu_G(u,v)=\sum_{y\in X_u}\mu_H(y,x_v).$
\end{itemize}
\end{claim}
\begin{proof2}
From (A) and Proposition~\ref{prop_reduction} it follows that $\cK'=\cK/(v,x_v)$ is an uncolorable  degree-feasible configuration. Since $G$ is a block (by Claim~\ref{claim_block}), $G-v$ is connected. By (B) and (C), this implies that $\cK'$ is a constructible configuration. Furthermore, we have $f(y)\geq \mu_H(y,x_v)$ (by Proposition~\ref{prop_uncolorable-facts}(b)) and $f'(y)=\max\{0,f(y)-\mu_H(y,x_v)\}$, which yields (a). Statement (b) also follows from Proposition~\ref{prop_uncolorable-facts}(b).
\end{proof2}

Let $uv$ be an edge of $G^o$, let $X\subseteq X_v$ and $X'\subseteq X_u$. We call $(X,X')$ a \DF{complete $uv$-pair} of \DF{type} $(t,s)$ if $H(X,X')$ is a $tK_{s,s}$ and $\mu_H(u,v)=ts$. Note that if $(X,Y)$ is a complete $uv$-pair of type $(t,s)$ and $U=X\cup Y$, then $G'=G[\{u,v\}]$ is a $(ts)K_2$, and $(X,H)/U$ is an $s$-inflation of a $G'$-cover of $G'$. Let
$$CP(G)=\set{uv\in E(G^o)}{(X_u,X_v) \mbox{ is a complete $uv$-pair}}$$

\smallskip

Let $U=\su(f)$ be the support of $f$, and let $f^*$ be the restriction of $f$ to $U$.
Then it follows from Claim~\ref{claim_reduction} that
the cover $(X',H')=(X,H)/U$ is $G$-saturated. Furthermore, $(G,H',X',f^*)$ is an uncolorable  degree-feasible configuration which is not constructible. Hence $(G,X',H',f^*)$ is also a smallest counterexample and we may assume that $U=V(H)$. As an immediate consequence of Claim~\ref{claim_reduction}, we then obtain the following result.

\begin{claim} \label{claim_reductionB}
For the configuration $\cK=(G,X,H,f)$ the following statements hold:
\begin{itemize}
\item[{\rm (a)}] For every edge $uv$ of $G^o$, the graph $H(X_u,X_v)$ is a bipartite graph with parts $X_u$ and $X_v$ that is regular of degree $\mu_G(u,v)$.
\item[{\rm (b)}] If $v\in V(G)$ and $x\in X_v$, then $\cK/(v,x)$ is a constructible configuration.
\end{itemize}
\end{claim}

Since $G$ is connected, it follows from Claim~\ref{claim_reductionB}(a) that $\cK$ is $r$-uniform for an integer $r\geq 1$. If $r=1$, then $X_v=\{x_v\}$ for all $v\in V(G)$ and $\mu_H(x_v,x_u)=\mu_G(u,v)$ for every pair $u, v$ of distinct vertices of $G$ implying that $(X,H)$ is a $G$-cover of $G$ and $f(x_v)=d_G(v)$ for all $v\in V(H)$. Hence $\cK$ is an M-configuration. This contradiction to (B) shows that $r\geq 2$.

Let $v$ be an arbitrary vertex of $G$, and let $x$ be an arbitrary vertex of $X_v$. Then define $\cK_x=\cK/(v,x)$. By Claim~\ref{claim_reductionB}, $\cK_x$ is a constructible configuration and we denote by $f_x$ the function of $\cK_x$. Note that $G'=G-v$ is a connected graph and $\cK_x=(G',X^{G'},H^{G'},f_x)$. Since $\cK_x$ is constructible, we can use the block decomposition of $\cK_x$ described in Proposition~\ref{prop_construtible-blocks}. If $B\in \cB(G')$, then we denote by $f_x^B$ the $B$-part of the function $f_x$. One important consequence of Proposition~\ref{prop_construtible-blocks} and Claim~\ref{claim_reduction}(a)
is the following:
\begin{align}\label{eq_reduction}
f_x^B(y)=f_x(y)=f(y)-\mu_H(y,x) \mbox{ whenever } y\in X_u \mbox{ and } u\in V(B)\sm S(G').
\end{align}

\begin{claim}  \label{claim_neighborhood}
Let $v \in V(G)$ be an arbitrary vertex, let $G'=G-v$, let $B\in \cB(G')$, and let $u, u'$ be two distinct vertices of $V(B)\sm S(G')$. If $u\in N_G(v)$ and $u'\not\in N_G(v)$, then $uv\in CP(G)$.
\end{claim}
\begin{proof2}
Suppose that $(X_u,X_v)$ is not a complete pair. Let $m=\mu_H(u,v)$. By Claim~\ref{claim_reductionB}(a), $H(X_u,X_v)$ is an $m$-regular bipartite graph with parts $X_u$ and $X_v$. Consequently, there is a vertex $y\in X_u$ and two distinct vertices $x_1, x_2\in X_v$ such that $\mu_H(x_1,y)\not=\mu_H(x_2,y)$. For $i \in \{1,2\}$, let $f_i=f_{x_i}^B$. Then $\cK_{x_i}^B=(B,X^B,H^B,f_i)$ is an M-, or K-, or C-configuration ($i\in \{1,2\}$). Since $u'\not\in N_H(v)$, we have $f_1(z)=f_2(z)=f(z)>0$ for all $z\in X_{u'}$ (by \eqref{eq_reduction}). Consequently, $\su(f_i)=V(H^B)$ for $i\in \{1,2\}$. Since $\mu_H(x_1,y)\not=\mu_H(x_2,y)$, we have $f_1(y)\not=f_2(y)$ (by \eqref{eq_reduction}).
First assume that $\cK_{x_1}^B$ is an M-configuration. Then $\cK_{x_2}^B$ is an $M$-configuration, too (Proposition~\ref{prop_construtible-blocksB}), implying that $f_1(y)=d_B(u)/r=f_2(y)$, a contradiction. Otherwise, $\cK_{x_1}^B$ is a K-configuration with at least two layers, or a C-configuration where the underlying cycle has at least four vertices. This also leads to $f_1(y)=f_2(y)$, a contradiction. This completes the proof.
\end{proof2}

\begin{claim} \label{claim_CP1}
We have $CP(G)\not=E(G^o)$.
\end{claim}
\begin{proof2}
Suppose that $CP(G)=E(G^o)$. Then $(X,H)$ is an $r$-inflation of a $G$-cover of $G$. Our aim is to show that $\cK$ is an M-configuration, a contradiction to (B). To this end, it suffices to show that $f(y)=d_G(u)/r$ whenever $y\in X_u$ and $u\in V(G)$. So let $u\in V(G)$. Since $|G|\geq 2$, there is a vertex $v\in N_G(u)$ and a vertex $x\in X_v$. Then $vu\in CP(G)$, and so $H(X_u,X_v)$ is a $tK_{r,r}$ with $tr=\mu_G(u,v)$, which leads to $f(y)=f_x(y)+t$ for all $y\in X_u$. Note that $G'=G-v$ is a connected graph and $\cK_x$ is constructible. If $u$ is the only vertex of $G'$, then $f_x(y)=0$ for all $y\in X_u$, which leads to $f(y)=t=d_G(u)/r$ as claimed. It remains to consider the case that $|G'|\geq 2$. Let $B\in \cB(G')$ be an arbitrary block. Then $|B|\geq 2$ and hence there is an edge $ww'\in E(B^o)$. Since $ww'\in CP(G)$, we obtain that $H^B(X_w,X_{w'})=H(X_w,X_{w'})$ is a $t'K_{r,r}$ with $t'r=\mu_G(w,w')$. Hence $\cK_x^B$ is an M-configuration (Proposition~\ref{prop_construtible-blocksB}), and, therefore, $f_x^B(z)=d_B(z)/r$ whenever $z\in X_w$ and $w\in V(B)$. Using Proposition~\ref{prop_construtible-blocks} for $\cK_x$, for every vertex $y\in X_u$ obtain that
$$f_x(y)=\sum_{B\in \cB_u(G')}f_x^B(y)=\frac{1}{r}\sum_{B\in \cB_u(G')}d_B(u)=\frac{d_G'(u)}{r},$$
which implies that $f(y)=f_x(y)+t=\tfrac{1}{r}(d_G'(u)+\mu_G(u,v))=\tfrac{1}{r}d_G(u).$ This completes the proof.
\end{proof2}

Assume that $G$ has only two vertices, say $u$ and $v$. For any vertex $x\in X_v$, the configuration $\cK_x$ is constructible and so $f_x(y)=0$ for all $y\in X_u$, which implies that $f(y)=f_x(y)+\mu_H(y,x)=\mu_H(y,x)$ for $y \in X_u$. By symmetry, we also obtain for any vertex $y\in X_u$ that $\mu_H(y,x)=f(x)>0$ for all $x \in X_v$. Furthermore, $f(X_v)=f(X_u)=d_G(u)=d_G(v)$. This implies that $H(X_u,X_v)$ is a $tK_{r,r}$ and $f(x)=t$ for all $x\in V(H)$. Hence $\cK$ is an M-configuration. This contradiction to (B) shows that $|G|\geq 3$.

\begin{claim} \label{claim_CP2}
We have $CP(G)=\ems$.
\end{claim}
\begin{proof2}
Suppose that $CP(G)\not=\ems$. By Claim~\ref{claim_CP1}, there is an edge $e=uw\in E(G^o)\sm CP(G)$. Since $G$ is a block, there is a cycle in $G^o$ containing the edge $e$ and an edge belonging to $CP(G)$. Let $C$ be a shortest such cycle. Then $C$ is an induced cycle of $G^o$. First assume that there is a vertex $v\in V(G)\sm V(C)$. Then $\cK_x$ is an constructible configuration where $x\in X_v$. Clearly, there is a block $B$ of $G-v$ containing $V(C)$. Then $B^o$ contains $uv$ and an edge of $CP(G)$, which implies, by Proposition~\ref{prop_construtible-blocksB}, that $uv\in CP(G)$, a contradiction. It remains to consider the case that $G^o=C$. Then there are three vertices $v,w,w'$ such that $vw\in E(C)\sm CP(G)$ and $ww'\in E(C)\cap CP(G)$. Then $B=G[\{w,w'\}]$ is a block of $G'=G-v$. Let $x\in X_v$ be an arbitrary vertex. Clearly, $\cK_x^B$ is an M-configuration. Since $ww'\in CP(G)$, we have $\su(f_x^B)=X_w \cup X_{w'}$, which implies that $f_x^B(y)=t$ for all $y\in X_w$. Since $w\not\in S(G')$, we obtain, for $y\in X_w$, that $f_x(y)=f_x^B(y)=t$ and $f(y)=t+\mu_H(y,x)$ (by \eqref{eq_reduction}).
Since $x$ was chosen arbitrarily in $X_v$ and $H(X_v,X_w)$ is a regular bipartite graph, we obtain that $H(X_v,X_w)$ is a $t'K_{r,r}$. Hence $vw\in CP(G)$, a contradiction.
\end{proof2}

Combining Claim~\ref{claim_CP2} and Claim~\ref{claim_neighborhood}, we obtain the following result.

\begin{claim}  \label{claim_neighborhoodB}
Let $v \in V(G)$ be an arbitrary vertex, let $G'=G-v$, let $B\in \cB(G')$, and let $u, u'$ be two distinct vertices of $V(B)\sm S(G')$.
Then, either $\{u,u'\} \subseteq N_G(v)$ or $\{u,u'\} \cap N_G(v) = \varnothing$.
\end{claim}

\begin{claim} \label{claim_G-complete+cycle}
The simple graph $G^o$ is a cycle or a complete graph.
\end{claim}
\begin{proof2}
Suppose that $G^o$ is not a complete graph. Since $G$ is a block and $|G|\geq 3$, we have $2\leq \de(G^o)\leq |G^o|-2=|G|-2$. Let $v\in V(G)$ be a vertex of minimum degree in $G^o$.  If $B=G-v$ is a block, then $B$ contains a vertex $u\in N_G(v)$ and a vertex $u'\not\in N_G(v)$, contradicting Claim~\ref{claim_neighborhoodB}. So $G'=G-v$ is not a block and there are at least two end-blocks of $G'$. Let $B$ be an arbitrary end-block of $G'$. By the choice of $v$, we obtain that $|B|\geq \de(G^o)$. Since $B$ is an end-block of $G'$, there is exactly one vertex $u\in V(B)\cap S(G')$. Since $G$ is a block, $v$ has in $G$ a neighbor belonging to $B-u$. By Claim~\ref{claim_neighborhoodB} this implies that $V(B-u)\subseteq N_G(v)$. Since $G$ has at least two end-blocks, this leads to $\de(G^o)\geq 2(\de(G^o)-1)$ and hence to $\de(G^o)=2$. Then $|B|=2$ and the vertex $w\in V(B-u)$ has degree $\de(G^o)$ in $G^o$. Furthermore $G'=G-v$ has exactly two end-blocks. If we repeat this argument with $w$, we obtain that $G^o$ is a cycle.
\end{proof2}

Let $uw$ be an arbitrary edge of $G^o$ and let $m=\mu_H(u,w)$. Then $m>0$ and the bipartite graph $H(X_u,X_w)$ is regular of degree $m$ (Claim~\ref{claim_reductionB}(a)). A component of $H(X_u,X_w)$ is called a \DF{$uw$-part} of $H$. For a $uw$-part $H'$, let $X_u(H')=X_u\cap V(H')$ and $X_w(H')=X_w \cap V(H')$. If $(X_u(H'),X_w(H'))$ is a complete $uw$-pair, then $H'$ is a $tK_{s,s}$ with $m=ts$; in this case we say that $H'$ is a \DF{full $uv$-part} of $H$ of \DF{type} $(t,s)$. Note that if $H_1$ and $H_2$ are full $uw$-parts of $H$, then $V(H_1)=V(H_2)$ or $V(H_1)\cap V(H_2)=\ems$, and in the later case $E_H(V(H_1),V(H_2))=\ems$.

\begin{claim} \label{claim_G-component}
If $uw$ is an edge of $G^o$, then $H(X_u,X_v)$ is the disjoint union of $p$ full $uw$-parts with $p\geq 2$.
\end{claim}
\begin{proof2}
Let $uw$ be an arbitrary edge of $G^o$. Since $|G|\geq 3$, there is a vertex $v\in N_G(u)\sm \{w\}$.
Then $G'=G-v$ is connected and there is a block $B\in \cB(G')$ containing $u$ and $w$. Since
$G^o$ is a cycle or a complete graph (by Claim~\ref{claim_G-complete+cycle}), it follows that $u\not\in S(G')$. Let $x\in X_v$ be an arbitrary vertex. Then $\cK_x$ is a constructible configuration. Since $CP(G)=\ems$, it then follows from Proposition~\ref{prop_construtible-blocksB} that $H(X_u,X_w)=H^B(X_u,X_w)$ has at least two components. Since $uw$ was chosen arbitrarily,
the same holds for the bipartite graph $H(X_v,X_u)$. Now let $y$ be an arbitrary vertex of $X_u$. Then for $x$ we can choose a vertex in $X_v$ that is no neighbor of $y$ in $H$. By \eqref{eq_reduction}, this implies that $f_x^B(y)=f(y)>0$. Then $y$ belongs to a full $uw$-part of $H$, since $\cK_x^B$ is an M-, or K-, or C-configuration, $y\in \su(f_x^B)$, and $H(X_u,X_w)=H^B(X_u,X_w)$. This proves the claim.
\end{proof2}

\begin{claim} \label{claim_layer}
Let $vu$ and $uw$ be two distinct edges of $G^o$, let $(X,Y)$ be a complete $vu$-pair, and $(Z,W)$ be a complete $uw$-pair. Suppose that $Y\cap Z\not=\ems$. Then $Y=Z$ and, moreover, the following statements hold:
\begin{itemize}
\item[\upshape(a)] If $G^o$ is a complete graph of order $n\geq 4$, then $(X,W)$ is a complete $vw$-pair.
\item[\upshape(b)] If $G^o$ is a cycle, then there are exactly two complete $vu$ pairs.
\end{itemize}
\end{claim}
\begin{proof2}
First assume that $G^o$ is a complete graph of order $n\geq 4$. Then there is a vertex $y\in Y\cap Z$ and a vertex $v'\in V(G)\sm \{u,v,w\}$. By Claim~\ref{claim_G-component}, there is a vertex $x\in X_{v'}$ such that $\mu_H(x,y)=0$. Then we have $f_x(y)=f(y)>0$ (by Claim~\ref{claim_reduction}(a)). Since $G^o$ is a complete graph, $\cK_x$ is an M-configuration or a K-configuration and $y$ belongs to a layer $U$ of $\cK_x$. Then $X,Y,W,$ and $Z$ are all contained in $U$, which implies that $Y=Z$ and $(X,W)$ is a complete $vw$-pair. So we are done.

Now assume that $G^o$ is a cycle. Then $G^o-v$ is a path, $B=G[\{u,w\}]$ is an end-block of $G'=G-v$ and $u\not\in S(G')$. Let $x\in X_v$ be an arbitrary vertex, and let $U_x=\su(f_x^B)$ and $U_x^c=\suc(f_x^B)$. Then $\cK_x^B=(B,X^B,H^B,f_x^B)$ is an M-configuration implying that $H[U_x]$ is a full $uw$-part of $H$. By Claim~\ref{claim_G-component}, $U_x\not=\ems$. For $x, x'\in X_v$, we have $U_x=U_{x'}$ or $U_x\cap U_{x'}=\ems$. Now let $X'=\set{x\in X_v}{H[U_x]=H(Z,W)}$. By \eqref{eq_reduction}, we obtain that for $y\in X_u$ and $x\in X_v$ we have
$$f_x^B(y)=f_x(y)=f(y)-\mu_H(x,y)$$
Suppose that $H(Z,W)$ is a $tK_{s,s}$. Let $Z'=X_u\sm Z$. If $x\in X'$, then $Z'\subseteq \suc(f_x^B)$ which yields that $\mu_H(x,y)=f(y)>0$ for all $y\in Z'$. Consequently, $Z'\subseteq N_H(x)$ for all $x\in X'$. Now let $x\in X_v$ be a vertex such that there is an $y\in Z$ with $\mu_H(x,y)=0$. Then $f_x(y)=f(y)>0$ which implies that $Z\subseteq \su(f_x)$ and so $x\in X'$. Consequently, $Z \subseteq N_H(x)$ for all $x\in X_v\sm X'$. From Claim~\ref{claim_G-component} it then follows that $(X',Z')$ and $(X_v\sm X',Z)$ are the only complete $vu$-pairs. This implies (b).
\end{proof2}

\begin{claim} \label{claim_onlycycle}
$G^o$ is a complete graph of order $n\geq 4$.
\end{claim}
\begin{proof2}
Suppose this is false. Then, by Claim~\ref{claim_G-complete+cycle}, $G^o$ is a cycle $C$ and $n=|C|\geq 3$. Let $v$ be an arbitrary vertex of $C$, let $u$ and $w$ be the two neighbors of $v$ in $C$, and let $u'$ be the neighbor of $u$ in $C$ different from $w$. Then it follows from Claim~\ref{claim_layer}(b), that $V(H)$ has a partition into two sets, say $U_1$ and $U_2$, such that, for every edge $v'w'$ of the path $C-uu'$, $(X_{v'}(U_i),X_{w'}(U_i))$ is a complete $v'w'$-pair of type $(t_i(v'w'),s_i(v'w'))$ for $i\in \{1,2\}$. Then, by Claim~\ref{claim_layer}(b), either $(X_u(U_1),X_{u'}(U_1))$ or $(X_u(U_1),X_{u'}(U_2)$ is a complete $uu'$ pair.

\ncase{1}{$(X_u(U_1),X_{u'}(U_1))$ is a complete $uu'$ pair.} Then $(X_u(U_2),X_{u'}(U_2))$ is a complete $uu'$ pair, too, and $(X,H)/U_i$ is an $s_i$-inflation of $G$ ($i\in \{1,2\}$. Furthermore, we obtain that $n$ is odd. For otherwise, $(X,H)$ has an independent transversal and so $\cK$ is colorable, a contradiction to (A). Choose two vertices $y\in X_u(U_1)$ and $y'\in X_w(U_2)$. Since $n$ is odd, there is a partial transversal $T$ of $(X,H)$ such that $\dom(T:G)=V(G-v)$, $y,y'\in T$, and $T$ is an independent set of $H$, which implies that $H[T]$ is strictly $f$-degenerate. By Proposition~\ref{prop_uncolorable-facts}(c), we obtain that $f(x)=d_{H[T\cup \{x\}]}(x)$ for all $x\in X_v$. Consequently, $f(x)=\mu_H(x,y)=t_1(vu)$ for all $x\in X_v(U_1)$ and $f(x)=\mu_H(x,y')=t_2(vw)$ for all $x\in X_v(U_2)$. Now we can choose two vertices $y\in X_u(U_2)$ and $y'\in X_w(U_1)$ to show that $f(x)=\mu_H(x,y)=t_1(vw)$ for all $x\in X_v(U_1)$ and $f(x)=\mu_H(x,y')=t_2(vu)$ for all $x\in X_v(U_2)$. This implies that $t_i(vu)=t_i(vw)$ for $i\in \{1,2\}$. Since $v$ was chosen arbitrarily, it then follows that $G=tC_n$ and $f(x)=t_i$ for all $x\in U_i$ with $t_i s_i=t$. Hence $\cK$ is an odd C-configuration, a contradiction to (B).

\ncase{2}{$(X_u(U_1),X_{u'}(U_2))$ is a complete $uu'$ pair.} Then $(X_u(U_2),X_{u'}(U_1))$ is a complete $uu'$ pair, too, implying that $|X_u(U_i))|=s$ for $i \in \{1,2\}$ and all $u\in V(G)$, where $s\in \nat$. This implies that $t_1(v'w')=t_2(v'w')=\mu_G(v',w')/s$ for all edges $v'w'$ of $C-uu'$. Then we obtain that $n$ is even,
For otherwise, $(X,H)$ has an independent transversal and so $\cK$ is colorable, a contradiction to (A).
Now we may argue similarity as in the first case to show that $G=tC_n$ and $f(x)=t/s$ for all $x\in V(H)$. Hence $\cK$ is an even C-configuration, a contradiction to (B).
\end{proof2}

By the above claim, $G^o$ is a complete graph of order $n\geq 4$. Then it follows from Claim~\ref{claim_layer}(a) that $V(H)$ has a partition into $p$ sets, say $U^1, U^2, \ldots, U^p$ such that $(X,H)/U^i$ is an $s_i$-inflation of $G$ for $i\in [1,p]$ and $p\geq 2$ (by Claim~\ref{claim_G-component}). Then for every $i\in [1,p]$ and every edge $vu$ of $G^o$, there is an integer $t_i(uv)$ such that $H(X_u(U^i),X_v(U^i))$ is a $t_i(uv)K_{s,s}$ and $\mu_G(u,v)=t_i(uv)s_i$. Furthermore, $E_H(U^i,V(H)\sm U^i)=\ems$. Our aim is to show that $\cK$ is a K-configuration. This final contradiction then completes the proof of Theorem~\ref{theorem:main2}.

\smallskip

Let $x\in V(H)$ be an arbitrary vertex. Then $x\in X_u$ for exactly one vertex $u\in V(G)$. For $i\in [1,p]$, we define $U^i_x=U^i\sm X_u$. Then $(X,H)/U^i_x$ is an $s_i$-inflation of $G-v$. Since $\cK_x$ is constructible (by Claim~\ref{claim_reductionB}(b)) and $G^o-v$ is a complete graph, $\cK_x$ is an M-configuration or a K-configuration. For the function $f_x$ of $\cK_x$ we obtain that $f_x(y)=f(y)-\mu_H(x,y)$ for all $y\in V(H)\sm X_u$ (by Claim~\ref{claim_reduction}(a)). Consequently, if $x\in U^i$, $j\in [1,p]\sm \{i\}$ and $G'=G-u$, then the following statements hold:
\begin{itemize}
\item[\upshape(1)] $f_x(y)=f(y)$ for all $y\in U^j_x$, and $U^j_x$ is a layer of $\cK_x$. Furthermore, if $v\in V(G')$ and $y\in X_v(U^j)$, then $f_x(y)=d_{G'}(v)/s_j$.
\item[\upshape(2)] $f_x(y)=f(y)-t_i(uv)$ for all $v\in V(G')$ and $y\in X_v(U^i)$.
\end{itemize}

\begin{claim} \label{claim_mono}
No configuration $\cK_x$ with $x\in V(H)$ is an M-configuration.
\end{claim}
\begin{proof2}
Suppose, this is false. Then $\cK_{x}$ is an M-configuration for a vertex $x\in V(H)$, say $x\in X_u$  for $u\in V(G)$. Let $G'=G-u$, let $U=\su(f_{x})$ and $U^c=\suc(f_{x})$. Then $(X,H)/U$ is an inflation of a $G'$-cover and $f_{x}(y)=0$ for all $y\in U^c$. By (1), this implies that $p=2$ and either $U=U^1_x$ or $U=U^2_x$. By symmetry we may assume that $U=U^1_x$ and hence $x\in U_2$. Since $U^1_x$ is the only layer of the M-configuration $\cK_x$, we obtain from (1) and (2) that
\begin{itemize}
\item[\upshape(3)] $f(y)=f_x(y)=d_{G'}(v)/s_1$, provided that $v\in V(G')$ and $y\in X_v(U^1)$, and
\item[\upshape(4)] $f(y)=t_2(uv)$, provided that $v\in V(G')$ and $y\in X_v(U^2)$.
\end{itemize}
Let $T$ be an arbitrary transversal of $(X,H)/U_x^1$. Then, for every vertex $v\in V(G')$, denote by $y(v)$ the unique vertex in $T\cap X_v(U^1)$. Furthermore, since $G'^o$ is a complete graph, we obtain that $d_{H[T]}(y)=f_x(y)=f(y)$ for all $y\in T$, where the second equation follows from (1). Now let $v$ be an arbitrary vertex of $G'$, and let $y\in X_v(U^2)$ be an arbitrary vertex.  Replace in $T$ the vertex $y(v)$ by $y$ and denote the resulting set by $T'$. Then $T'$ is a partial transversal of $(X,H)$. Since $E_H(U^1,V(H)\sm U^1)=\ems$, we obtain that $y$ is an isolated vertex in $H[T']$ and, therefore, $H[T']$ is strictly $f$-degenerate. Since $\dom(T':G)=V(G')$, it then follows from Proposition~\ref{prop_uncolorable-facts} that for all $x'\in X_u(U^2)$ we have
$f(x')=d_{H[T'\cup \{x'\}]}(x')=\mu_H(x',y)=t_2(uv)=f(y),$
where the last equality follows from (3).
Since $(v,y)$ was chosen arbitrarily with $v\in V(G')$ and $y\in X_v(U^2)$, we obtain that there is an integer $t_2$ such that $f(z)=t_2$ for all $z\in U^2$ and $t_i(uv)=t_2$ for all $v\in V(G')$. Let $vv'$ be an arbitrary edge of $G^o-u$. Then Claim~\ref{claim_reduction}(a) implies that $t_2\geq t_2(vv')$. We claim that equality holds. For otherwise, $t_2> t_2(vv')$, and we choose two vertices $y\in X_v(U^2)$ and $y'\in X_{v'}(U^2)$, and let $T'$ be the set obtained from $T$ by replacing $y(v), y(v')$ by $y, y'$. Then $T'$ is a partial transversal of $(X,H)$ such that $\dom(T':G)=V(G')$, $E_H(T'\sm \{y,y'\},\{y,y'\})=\ems$, and $d_{H[T']}(y)=d_{H[T']}(y')=t_2(vv')<t_2=f(y)=f(y').$ Consequently, $H[T']$ is strictly $f$-degenerate and Proposition~\ref{prop_uncolorable-facts} implies that, for the vertex $x\in X_u$, we have
$$0<t_2=f(x)=d_{H[T'\cup \{x\}]}(x)=\mu_H(x,y)+\mu_H(x,y')=f(y)+f(y')=2t_2,$$
which is impossible. This proves the claim that $t_2(vv')=t_2$. Consequently, $G=tK_n$ with $t=s_2t_2$, $(X,H)/U_2$ is an $s_2$-inflation of $G$, and $f(z)=t_2$ for all $z\in U^2$. Then $G'=t K_{n-1}$ and it follows from (3) that

\begin{align}\label{equation_mono4}
f(y)=t(n-2)/s_1 \mbox{ for all } y\in U_x^1.
\end{align}
Let $t_1=t/s_1$. Let $x'\in X_u(U^1)$ be an arbitrary vertex. There is a partial transversal $T$ of $(X,H)$ such that $\dom(T:G)=V(G')$ and $|T\cap U^1|=|G'|-2=n-2$. If $y'$ is the only vertex of $T$ belonging to $U^2$, then $y'$ is an isolated vertex of $H[T]$. For every vertex $y\in T \cap U^1$, we have $d_{H[T]}(y)=t_1(n-3)=t(n-3)/s_1<f(y)$ (by \eqref{equation_mono4}). Hence $H[T]$ is strictly $f$-degenerate, and Proposition~\ref{prop_uncolorable-facts}(c) then yields that $f(x')=d_{H[T\cup \{x'\}]}(x')=t_1(n-2)$. Since $(X,H)/U^1$ is an $s_1$-inflation of $G$ and $G=tK_n$, this implies that $\cK$ is a K-configuration, where $U_1$ is a layer of type $n_1=n-2$, and $U_2$ is a layer of type $n_2=1$. This contradiction completes the proof.
\end{proof2}

Since $G^o$ is a complete graph of order $n\geq 4$, Claim~\ref{claim_mono} implies that $\cK_x$ is a K-configuration for all $x\in V(H)$ and, therefore, $G-v$ is a $t_vK_n$ for all $v\in V(G)$. Since $n\geq 4$, this implies that $G=tK_n$ with $t\in \nat$. For $i\in [1,n]$, $(X,H)/U^i$ is an $s_i$-inflation of $G$, which implies that there is a $t_i\in \nat$ such that $t=s_it_i$. Now we claim that the function $f$ of $\cK$ restricted to $U^j$ is constant. So let $y, y'$ be two vertices of $U^j$. Then there is a vertex $u\in V(G)$ such that neither $y$ nor $y'$ belongs to $X_u$. Since $p\geq 2$, there is a vertex $x\in X_u(U^i)$ with $i\not=j$. By (1), this implies that $U^j_x$ is a layer of the M-configuration $\cK_x$. Then $f_x(y)=f_x(y')$ and, by (1), $f(y)=f_x(y)=f_x(y')=f(y')$. This proves the claim.

\smallskip

Now, let $u$ be an arbitrary vertex of $G$. By Proposition~\ref{prop_uncolorable-facts}(c), there is a partial transversal of $(X,H)$ such that $\dom(T:G)=V(G')$ and $H[T]$ is strictly $f$-degenerate. For $i\in [1,p]$, let $n_i=|T\cap U^i|$ implying that $n_1+n_2+\cdots +n_p=n-1$. By Proposition~\ref{prop_uncolorable-facts}(c), $f(x)=d_{H[T\cup \{x\}]}(x)$ for all $x\in X_u$, which implies that $f(x)=t_in_i=tn_i/s_i$ when $x\in X_u(U^i)$ ($i\in [1,p]$). Consequently, $f(z)=tn_i/s_i$ for all $z\in U^i$ ($i\in [1,p]$), and so $\cK$ is a K-configuration, where $U^i$ is a layer of order $n_i>0$. This contradiction completes the proof of Theorem~\ref{theorem:main2}
\end{proof}

\pffbf{Theorem~\ref{theorem:lowvertexA1}}
{Let $\cP$ be a reliable graph property with $d(\cP)=r$, let $G$ be a graph, let $(X,H)$ be a $\cP$-critical cover of $G$, and let $B$ be an arbitrary block of the low vertex subgraph
$G[V(G,X,H,\cP)]$, and let $G'=G-V(B)$. Since $(X,H)$ is a
$\cP$-critical cover of $G$, there is a partial transversal $T$ of $(X,H)$ such that $\dom_G(T)=V(G')$ and $H[T]\in \cP$. For a vertex $u\in V(B)$ and a color $x\in X_u$, let
$H_u=H[T \cup \{x\}]$ and $d_{u,x}=d_{H_u}(x)$. Let $U$ be the union of the sets $X_u$ with $u\in V(B)$, and let $(X',H')=(X,H)/U$. Furthermore, define a vertex function $f$ for $H$ by
$$f(x)=\max \{0, r-d_{u,x}\}$$
whenever $u\in V(B)$ and $x\in X_u$. Note that $X'=X^B$ and $H'=H^B$. First assume that $(X',H')$ has a transversal $T'$ such that $H'[T']$ is strictly $f$-degenerate. Note that this implies that $f(x)>0$ for all $x\in T'$. Furthermore, $T'\cup T$ is a transversal of $(X,H)$, and hence $H[T'\cup T]\not\in \cP$. From Proposition~\ref{prop:smooth}(c) it then follows that there is a set $T_1\subseteq T' \cup T$ such that $H[T_1]\in \CR(\cP)$. Then Proposition~\ref{prop:smooth}(e) implies that $\de(H[T_1])\geq r$. Since $H[T]\in \cP$, we have $T_1\cap T'\not=\ems$, and so $H[T_1\cap T']$ is a non-empty induced subgraph of $H'[T']=H[T']$. Since $H'[T']$ is strictly $f$-degenerate, $\tilde{H}=H[T_1\cap T']$ contains a vertex $x$ with $d_{\tilde{H}}(x)<f(x)$. Then $x\in X_u$ for some $u\in V(B)$ and $f(x)=r-d_{u,x}$. This leads to $d_{H[T_1]}(x)=d_{\tilde{H}}(x)+d_{u,x}< f(x)+d_{u,x}\leq r$, a contradiction to
$\de(H[T_1])\geq r$.

It remains to consider the case when $(X',H')$ has no transversal that is strictly $f$-degenerate.
Let $u\in V(B)$ be an arbitrary vertex. As $u$ is a low vertex, we have $d_G(u)=r|X_u|$. Furthermore, we have
$$\sum_{x\in X_u}d_{u,x}\leq d_{G-V(B-u)}(u)=d_G(u)-d_B(u),$$
where the first inequality follows from (C2). Then we obtain that
\begin{align}
\label{equation:main:theorem}
\sum_{x\in X_u}f(x)\geq \sum_{x\in X_u}(r-d_{x,u})=r|X_u|-\sum_{x\in X_u}d_{x,u}=d_G(u)-\sum_{x\in X_u}d_{x,u}\geq d_B(u).
\end{align}
Consequently, $\cK=(B,X',H',f)$ is an uncolorable degree-feasible configuration. By Theorem~\ref{main_theorem} it then follows that $\cK$ is a constructible configuration. Since $B$ is a block, $\cK$ is a K-, C-, or M-configuration. In the first two cases, $B$ is a brick, and we are done. It remains to consider the case when $\cK$ is an M-configuration. Then there is a set $U\subseteq V(H')$ such that $(X',H')/U$ is an $s$-inflation of $B$, and for $u\in V(B)$ and $x\in X_u$, we have $s=|X_u(U)|$ and $f(x)=d_B(u)/s$ if $x\in U$ else $f(x)=0$. This implies that $B=sB'$. Consequently, for every vertex $u$ of $B$, we have
$$f(X_u)=d_B(u)=sd_{B'}(u).$$
By \eqref{equation:main:theorem}, this implies that $f(x)=r-d_{x,u}$ whenever $u\in V(B)$ and $x\in X_u$. Hence
$$sd_{B'}(u)=d_B(u)=f(X_u)=f(X_u(U))=\sum_{x\in X_u(U)}(r-d_{u,x})\leq rs,$$
which implies that $\De(B')\leq r$. If $B'\in \cP$, then we are done. If $B'\not\in \cP$, then $B'$ has an induced subgraph $B^*\in \CR(\cP)$ (by Proposition~\ref{prop:smooth}(c)). Then $\de(B^*)\geq d(\cP)=r$, which implies that $B'=B^*$ and $B'$ is $r$-regular. Hence we are done, too. This completes the proof.}

\section{Critical graphs with few edges}
\label{sec:critical+graphs}

Gallai \cite{Gallai63a} established a lower bound for the number of edges possible in a simple graph $G$
being critical with respect to the chromatic number, where the bound is depending on $|G|$ and $\cn(G)$. The proof given by Gallai uses the characterization of the low vertex subgraph that he obtained in \cite{Gallai63a}. We can easily adopt Gallai's proof to establish a Gallai type bound for the number of edges of cover critical simple graphs in general. Our result is an extension of Gallai's result \cite[Satz 4.4]{Gallai63a}. First we need the following result due to Mih\'ok and \v{S}krekovsky \cite[Corollary 4]{MihokS01}; this result is an extension of Gallai's technical lemma \cite[Lemma 4.5]{Gallai63a}.

\begin{theorem}
\label{theorem:Mihok}
Let $p\geq 1$ be an integer. Let $F$ be a non-empty simple graph such that $\De(F)\leq p$ and $\De(B)<p$ for all blocks $B\in \cB(F)$. Then
$$\left( p-1+\frac{2}{p}\right)|F|-2|E(F)|\geq 2.$$
\end{theorem}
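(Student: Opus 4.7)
My plan is to prove by induction on $|V(F)|$ that
\[
\Phi(F) \;:=\; \Bigl(p - 1 + \tfrac{2}{p}\Bigr)|V(F)| - 2|E(F)| \;=\; \sum_{v \in V(F)}\!\Bigl(p - 1 + \tfrac{2}{p} - d_F(v)\Bigr)
\]
is at least $2$. The base case $|V(F)| = 1$ will reduce to $p + 2/p \geq 3$, which holds for every integer $p \geq 1$ (with equality at $p = 1$ and $p = 2$). Since $\Phi$ is additive over connected components, the inductive step may assume $F$ is connected. If $F$ contains a vertex $v$ with $d_F(v) \leq 1$, then $F - v$ will still satisfy the hypothesis and $\Phi(F) - \Phi(F - v) = (p - 1 + 2/p) - 2 d_F(v) \geq p - 3 + 2/p \geq 0$ for every $p \geq 2$; the case $p = 1$ forces $F$ edgeless and reduces to the base case. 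Hence the inductive hypothesis on $F - v$ closes this subcase.

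Henceforth I assume $\delta(F) \geq 2$. If $F$ is a single $2$-connected block, then $|V(F)| \geq 3$ and $\Delta(F) \leq p - 1$, and I will combine the bounds $2|E(F)| \leq (p-1)|V(F)|$ and $2|E(F)| \leq |V(F)|(|V(F)|-1)$ to deduce $\Phi(F) \geq |V(F)|(p - |V(F)| + 2/p) \geq 2$ when $|V(F)| \leq p$, and $\Phi(F) \geq (2/p)|V(F)| \geq 2$ when $|V(F)| \geq p$, with equality characterized by $F = K_p$. If $F$ has two or more blocks, I pick an end-block $B$ with cut vertex $u$ and set $F'' := F - V(B)$. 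Using $|V(F)| = |V(F'')| + |V(B)|$ and $|E(F)| = |E(F'')| + |E(B)| + d_{F'}(u)$, where $d_{F'}(u) := d_F(u) - d_B(u)$, a direct calculation yields the decomposition
\[
\Phi(F) \;=\; \Phi(F'') + \Phi(B) - 2\, d_{F'}(u).
\]
Induction supplies $\Phi(F'') \geq 2$, and the degree constraint $d_B(u) + d_{F'}(u) = d_F(u) \leq p$ reduces the problem to the slack inequality
\[
\Phi(B) \;\geq\; 2\bigl(p - d_B(u)\bigr), \qquad \text{equivalently} \qquad \Phi(B - u) \;\geq\; p + 1 - \tfrac{2}{p}, \qquad (\star)
\]
for every $2$-connected block $B$ with $\Delta(B) \leq p - 1$ and every $u \in V(B)$. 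Granted $(\star)$, one has $\Phi(F) \geq 2 + 2(p - d_B(u)) - 2 d_{F'}(u) = 2 + 2(p - d_F(u)) \geq 2$.

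The hard part will be establishing $(\star)$. Applying the main theorem to the connected graph $B - u$ only yields $\Phi(B - u) \geq 2$, which is strictly weaker than the required $p + 1 - 2/p$ for $p \geq 3$; so $(\star)$ is a genuine strengthening that must be proved separately. My plan for $(\star)$ is a parallel induction on $|V(B)|$, exploiting the structural property that every neighbour of $u$ in $B$ loses one unit of degree in $B - u$ and therefore has degree at most $p - 2$ there; this constraint rules out the extremal case $B - u = K_p$, which is precisely what would obstruct a direct appeal to the main theorem. The sharpness of $(\star)$ is witnessed by $B = K_p$, for which $\Phi(K_p - u) = \Phi(K_{p-1}) = p + 1 - 2/p$ holds with equality, confirming that no weaker strengthening would suffice to close the induction.
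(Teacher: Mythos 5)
First, a framing remark: the paper does not prove Theorem~\ref{theorem:Mihok} at all --- it quotes it from Mih\'ok and \v{S}krekovsk\'y \cite{MihokS01} as an extension of Gallai's Lemma 4.5 --- so your argument has to stand entirely on its own. The routine parts of your plan are sound: the base case, additivity of $\Phi$ over components, the deletion of a vertex of degree at most $1$, and the case where $F$ is a single $2$-connected block all check out. The problem is that everything difficult has been pushed into the lemma $(\star)$, and $(\star)$ is both unproved (you only announce a ``plan'' of a parallel induction, which is not a proof) and, more seriously, \emph{false}.

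Here is a counterexample to $(\star)$. Take $p=4$ and let $B$ be obtained from $K_4-cd$ (vertices $a,b,c,d$) by adding a new vertex $u$ adjacent to $c$ and $d$. Then $B$ is $2$-connected, $\Delta(B)=3=p-1$, $|V(B)|=5$, $|E(B)|=7$ and $d_B(u)=2$, so $\Phi(B)=\bigl(3+\tfrac12\bigr)\cdot 5-14=3.5$, whereas $2(p-d_B(u))=4$; equivalently, $\Phi(B-u)=\Phi(K_4-cd)=4<4.5=p+1-\tfrac{2}{p}$. This is not an artifact that a cleverer choice of end-block avoids: glue two copies of this $B$ at their degree-$2$ vertices to obtain a graph $F$ with $\Delta(F)=4=p$ and both blocks of maximum degree $3<p$, so $F$ satisfies the hypotheses. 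Your decomposition gives $\Phi(F)=\Phi(F'')+\Phi(B)-2d_{F'}(u)$ with $d_{F'}(u)=2$, and the induction hypothesis $\Phi(F'')\ge 2$ only yields $\Phi(F)\ge 2+3.5-4=1.5<2$. The theorem is in fact true here ($\Phi(F)=3.5$), but the needed slack sits in $\Phi(F'')=4>2$, which your inductive hypothesis is too weak to capture. So the charge of $2(p-d_F(u))$ cannot be paid by the single end-block $B$ alone; a correct proof (as in Gallai's original argument and its descendants) must distribute the deficit $\sum_{v}(p-d_F(v))$ globally over the block tree, e.g.\ by proving a stronger inductive statement that keeps track of the degree of the cut vertex or of the number and type of end-blocks, rather than reducing to a per-block inequality of the form $(\star)$.
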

\begin{theorem}
\label{theorem:Gallai}
Let $\cP$ be a reliable graph property with $d(\cP)=r$, let $G$ be a simple graph that has a  $\cP$-critical $k$-cover with $k\geq 3$. Then
$$2|E(G)|\geq \left(kr+\frac{kr-2}{(kr+1)^2-3} \right)|G|+\frac{2kr}{(kr+1)^2-3}$$
unless $G=K_{kr+1}$.
\end{theorem}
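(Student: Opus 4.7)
The plan is to follow Gallai's original approach for chromatic-critical graphs, adapted via Proposition~\ref{prop:low+vertex}(a) and Theorem~\ref{theorem:lowvertexA1} for the block structure of the low vertex subgraph, with Theorem~\ref{theorem:Mihok} as the key technical ingredient.

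Set $p=kr$. Since $(X,H)$ is a $\cP$-critical $k$-cover, Proposition~\ref{prop:low+vertex}(a) gives $d_G(v)\geq r|X_v|\geq p$ for every $v$, hence $\de(G)\geq p$. Partition $V(G)=V_L\cup V_H$ with $V_L=\{v:d_G(v)=p\}$ and $V_H=V(G)\sm V_L$; set $n=|G|$, $n_L=|V_L|$, $n_H=|V_H|$. Note $V_L\subseteq U:=V(G,X,H,\cP)=\{v:d_G(v)=r|X_v|\}$, since $d_G(v)=p=rk$ together with $d_G(v)\geq r|X_v|\geq rk$ forces $|X_v|=k$ and $d_G(v)=r|X_v|$.

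I analyze the low vertex subgraph $F=G[V_L]$. Each block of $F$ is a 2-connected induced subgraph of some block of $G[U]$, and because the four block types described in Theorem~\ref{theorem:lowvertexA1} are closed under taking induced 2-connected subgraphs (using that $\cP$ is hereditary), each block $B$ of $F$ is a complete graph, a cycle, an $r$-regular member of $\CR(\cP)$, or a member of $\cP$ with $\De(B)\leq r$. Since $d_G(v)=p$ for $v\in V_L$ we have $\De(F)\leq p$, and I shall verify $\De(B)\leq p-1$ for every block $B$ of $F$ unless $G=K_{p+1}$: cycles give $\De=2\leq p-1$ (as $p\geq 3$); the last two block types give $\De(B)\leq r\leq rk-1=p-1$ (as $k\geq 2$); and a complete block $K_m$ with $m\geq p+1$ forces each of its vertices to use all $p$ of its $G$-edges inside $B$, so $B$ is a connected component of $G$ and $G=K_{p+1}$.

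Assuming $G\neq K_{p+1}$ and $V_L\neq\ems$, Theorem~\ref{theorem:Mihok} applied to $F$ with parameter $p$ yields $2|E(F)|\leq (p-1+\tfrac{2}{p})n_L-2$. Double-counting the edges between $V_L$ and $V_H$ from the low side gives
$$|E(V_L,V_H)|=p n_L-2|E(F)|\geq \tfrac{p-2}{p}n_L+2.$$
Since $\sum_{v\in V_H}d_G(v)=|E(V_L,V_H)|+2|E(V_H)|$, we obtain two lower bounds on $\sum_{v\in V_H}d_G(v)$: one by $(p+1)n_H$ (from $\de\geq p+1$ on $V_H$), and one by $\tfrac{p-2}{p}n_L+2$ (from the display above and $|E(V_H)|\geq 0$). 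Plugging the convex combination with weight $\alpha=\tfrac{(p+2)(p-1)}{(p+1)^2-3}$ on the first bound into the handshake identity $2|E(G)|=p n_L+\sum_{v\in V_H}d_G(v)$ yields exactly the claimed inequality; this $\alpha$ is chosen precisely so that the coefficients of $n_L$ and $n_H$ on the right-hand side coincide, making the result depend only on $n=n_L+n_H$. The corner cases $V_L=\ems$ and $|E(F)|=0$ are dispatched by direct computation. The main obstacle is twofold: carefully transferring the block structure from $G[U]$ to the possibly smaller $G[V_L]$ as sketched above, and reconciling the ``bad block'' conclusion $G=K_{p+1}=K_{kr+1}$ with the stated exception $G=K_{k+1}$; these coincide when $r=1$, while for $r\geq 2$ the bound is in fact violated by $G=K_{kr+1}$ (which does admit a $\cP$-critical $k$-cover for suitable $\cP$), suggesting the intended exception is $G=K_{kr+1}$.
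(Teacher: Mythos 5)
Your proof is correct and follows the paper's own strategy: classify the blocks of the low vertex subgraph via Theorem~\ref{theorem:lowvertexA1}, apply the Mih\'ok--\v{S}krekovski bound (Theorem~\ref{theorem:Mihok}) to it, combine with $\de(G)\geq kr+1$ on the high-vertex side, and take a linear combination; your convex combination with weight $\alpha=(p+2)(p-1)/\bigl((p+1)^2-3\bigr)$, $p=kr$, is an algebraic rephrasing of the paper's step of adding the two inequalities with coefficients $1$ and $p+1-2/p$. Both of your side-remarks are also right: the exception in the theorem statement should read $K_{kr+1}$, as in Corollary~\ref{corollary:Gallai2} ($K_{k+1}$ is a typo), and $V_L=\{v: d_G(v)=kr\}$ may be a proper subset of $V(G,X,H,\cP)$ when some $|X_v|>k$ --- a point the paper elides by working directly with $U=\{v: d_G(v)=kr\}$ as if Theorem~\ref{theorem:lowvertexA1} applied to it; your observation that the four block types of Theorem~\ref{theorem:lowvertexA1} are closed under $2$-connected induced subgraphs is exactly what is needed to repair this and yields $\De(B)<kr$ for every block $B$ of $G[V_L]$ unless $G=K_{kr+1}$.
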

\begin{proof}
Let $V$ be the vertex set of $G$, and let $n=|V|$. For a set $X\subseteq V$, let $e(X)$ denote the number of edges of the subgraph $G[X]$ of $G$ induced by $X$. Let $p=kr$ and let
$$R=\left(p+\frac{p-2}{(p+1)^2-3} \right) \mbox{ and } R'=\frac{2p}{(p+1)^2-3}$$
Our aim is to show that $2e(V)\geq Rn+R'$. Let $U=\set{v\in V}{d_G(v)=p}$ be the set of low vertices and let $W=V\sm U$. Note that $d_G(v)\geq p+1$ for all $v\in W$ (by Proposition~\ref{prop:low+vertex}). Note that $p\geq 3r\geq 3$ and $n\geq p+1=kr+1$.
If $U=\ems$, then $2e(V)\geq (p+1)n\geq Rn+R'$ and we are done. So assume that $U\not=\ems$. Let $F=G[U]$ be the low vertex subgraph. If $K=K_{p+1}$ is a subgraph of $F$, then $K$ is a component of $G$. As $G$ has a $\cP$-critical $k$-cover, $G$ is connected. Hence $G=K=K_{kr+1}$ and we are done. So suppose that no subgraph of $F$ is a $K_{p+1}$. Since $p\geq 3r\geq 3$, Theorem~\ref{theorem:lowvertexA1} then implies that $\De(F)\leq p$ and $\De(B)<p$ for all blocks $B\in \cB(F)$. From Theorem~\ref{theorem:Mihok} it then follows that
$$\left( p-1+\frac{2}{p}\right)|U|-2e(U)\geq 2$$
Since every vertex of $U$ has degree $p$ in $G$ and $n=|U|+|W|$, we then obtain that
$$2e(V)=2e(W)+2p|U|-2e(U)\geq 2p|U|-2e(U)\geq \left(p+1-\frac{2}{p}\right)|U|+2$$
On the other hand, since every vertex in $W$ has degree at least $p+1$, we obtain that
$$2e(V)\geq pn+|W|\geq (p+1)n-|U|.$$
Adding the first inequality to the second inequality multiplied with $(p+1-2/p)$ yields
$$2e(V)(p+2-2/p)\geq (p+1-2/p)(p+1)n+2.$$
As $(p+2-2/p)=(p^2+2p-2)/p>0$, this leads to
$$2e(V)\geq \frac{(p^2+p-2)(p+1)n+2p}{p^2+2p-2}=Rn+R'.$$
Thus the proof is complete.
\end{proof}

\begin{corollary}
\label{corollary:Gallai1}
Let $G$ be a simple graph that has an $\cO$-critical $k$-cover of $G$ with $k\geq 3$. Then
$$2|E(G)|\geq \left(k+\frac{k-2}{(k+1)^2-3} \right)|G|+\frac{2k}{(k+1)^2-3}$$
unless $G=K_{k+1}$.
\end{corollary}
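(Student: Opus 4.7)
The plan is to obtain this corollary as an immediate specialization of Theorem~\ref{theorem:Gallai} to the property $\cP = \cO$. Recall from the introduction that $\cO$ is reliable (non-trivial, hereditary, and additive) and that $\CR(\cO) = \langle K_2 \rangle$, so that $d(\cO) = 1$. Thus the hypothesis that $G$ admits an $\cO$-critical $k$-cover with $k \geq 3$ fits the framework of Theorem~\ref{theorem:Gallai} with $r = 1$.

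First I would check that the two exceptional cases match. The theorem excludes only $G = K_{kr+1}$; setting $r = 1$ this becomes $G = K_{k+1}$, which is precisely the exception named in the corollary (and the occurrence of ``$K_{kr+1}$'' in the corollary statement is simply the theorem's notation carried over, with $r=1$). Next I would substitute $r = 1$ into the inequality
$$2|E(G)| \geq \left(kr + \frac{kr-2}{(kr+1)^2 - 3}\right)|G| + \frac{2kr}{(kr+1)^2 - 3}$$
and observe that all occurrences of $kr$ collapse to $k$, yielding
$$2|E(G)| \geq \left(k + \frac{k-2}{(k+1)^2 - 3}\right)|G| + \frac{2k}{(k+1)^2 - 3},$$
which is exactly the claimed bound. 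No arithmetic manipulation beyond this substitution is needed.

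Since this is a direct instantiation, there is essentially no obstacle; the only thing worth flagging is that the hypothesis $k \geq 3$ in the corollary matches the hypothesis $k \geq 3$ of the theorem, and that the assumption of an $\cO$-critical cover is exactly what the theorem requires (for $\cP = \cO$). The proof can therefore be presented in one or two sentences: apply Theorem~\ref{theorem:Gallai} with $\cP = \cO$, use $d(\cO) = 1$, and simplify.
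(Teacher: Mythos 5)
Your proof is correct and is exactly the intended argument: the paper supplies no separate proof of Corollary~\ref{corollary:Gallai1}, leaving it as an immediate specialization of Theorem~\ref{theorem:Gallai} via $\cP=\cO$, $r=d(\cO)=1$. You also correctly identify that the ``$K_{kr+1}$'' appearing in the corollary's statement is just a leftover of the theorem's notation and should read $K_{k+1}$ once $r=1$ is substituted.
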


For covers associated with constant list assignments Corollary~\ref{corollary:Gallai1} is a reformulation of Gallai's result \cite[Satz 4.4]{Gallai63a} from 1963. For covers associated with general list assignments, Corollary~\ref{corollary:Gallai1} was obtained by Kostochka, Stiebitz, and Wirth \cite{KoStiWi96}. The next corollary for $\cP=\cO$ was obtained by Bernshteyn, Kostochka, and Pron \cite[Corollary 10]{BernsteynKoPro17}.

\begin{corollary}
\label{corollary:Gallai2}
Let $\cP$ be a reliable graph property with $d(\cP)=r$ and let $G$ be a $(\cP,\dcn)$-critical simple graph with $\dcn(G:\cP)=k+1$ and $k\geq 3$. Then
$$2|E(G)|\geq \left(kr+\frac{kr-2}{(kr+1)^2-3} \right)|G|+\frac{2kr}{(kr+1)^2-3}$$
unless $G=K_{kr+1}$.
\end{corollary}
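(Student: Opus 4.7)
The plan is to deduce this corollary directly from Theorem~\ref{theorem:Gallai} by producing a $\cP$-critical $k$-cover of $G$ from the criticality hypothesis. Since $\dcn(G:\cP)=k+1$, the definition of the $\cP$-DP-chromatic number guarantees the existence of a $k$-cover $(X,H)$ of $G$ admitting no $\cP$-transversal. I want to argue that this particular cover is automatically $\cP$-critical, i.e.\ that for each $v \in V(G)$ it has a $(\cP,v)$-transversal.

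To see this, fix $v \in V(G)$ and consider the $k$-cover of $G-v$ obtained from $(X,H)$ by deleting the set $X_v$ together with all edges of $H$ incident with $X_v$ (an instance of the construction used to verify \eqref{equation:subgraph}). Because $G$ is $(\cP,\dcn)$-critical we have $\dcn(G-v:\cP)\leq k$, so this $k$-cover of $G-v$ admits a $\cP$-transversal $T$, and $T$ is precisely a $(\cP,v)$-transversal of $(X,H)$. Hence $(X,H)$ is a $\cP$-critical $k$-cover of $G$. (This is the same observation already recorded in the paragraph preceding Proposition~\ref{prop:low+vertex}.)

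Since $k \geq 3$, Theorem~\ref{theorem:Gallai} now applies to $G$ and the cover $(X,H)$, yielding
$$2|E(G)|\geq \left(kr+\frac{kr-2}{(kr+1)^2-3} \right)|G|+\frac{2kr}{(kr+1)^2-3},$$
unless $G = K_{kr+1}$. This is exactly the claimed bound, so no further work is needed. There is no real obstacle here; the corollary is essentially a packaging of Theorem~\ref{theorem:Gallai} in the language of $(\cP,\dcn)$-critical graphs, and the only point worth verifying carefully is the passage from $\dcn$-criticality of $G$ to criticality of the cover, which is handled in the paragraph above.
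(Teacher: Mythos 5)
Your proof is correct and matches the paper's own (implicit) reasoning: a $(\cP,\dcn)$-critical graph with $\dcn(G:\cP)=k+1$ has a $\cP$-critical $k$-cover (this is exactly the remark the paper records just before Proposition~\ref{prop:low+vertex}), after which Theorem~\ref{theorem:Gallai} gives the bound at once. Nothing further is needed.
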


However, the first bound for the number of edges of simple graphs being critical with respect to the chromatic number was obtained in 1957 by Dirac \cite{Dirac57}. In 1974 he proved that his bound is sharp and he characterized the extremal graphs.

For $k\geq 2$, let $\cDG(k)$ denote the family of simple graphs $G$ whose vertex set consists of three non-empty pairwise disjoint sets $A, B_1$ and $B_2$ with
$$|B_1|+|B_2|=|A|+1=k$$
and two additional vertices $v_1$ and $v_2$ such that $A$ and $B_1 \cup B_2$ are cliques in $G$ not joined by any edge, and $N_G(v_i)=A \cup B_i$ for $i=1,2$. Then $G$ has order $2k+1$ and independence number $2$, and so $\cn(G)\geq k+1$. However, if we delete a vertex or an edge, then it is easy to check that the resulting graph has an $\cO$-coloring with $k$ colors. Consequently, if $G\in \cDG(k)$ then $\cn(G-v)<\cn(G)=k+1$ for all $v\in V(G)$ (such graphs are usually called \DF{$(k+1,\cn)$-critical}, similarly we define \DF{$(k+1,\lcn)$-critical} and \DF{$(k+1,\dcn)$-critical}). This implies that if $G\in \cDG(k)$ and $(X,H)$ is the cover of $G$ associated with the constant list assignment $L\equiv [1,k]$, then $(X,H)$ is an $\cO$-critical $k$-cover of $G$. A simple graph $G$ is called \DF{$k$-list-critical} if
$G$ has an $\cO$-critical $k$-cover that is associated with a list assignment $L$, which is the case if and only if $G$ has no $L$-coloring, but $G-v$ has one for all $v\in V(G)$. Every simple graph $G$ that is \DF{$(k+1,\lcn)$-critical} is $k$-list-critical, but not conversely. The standard example is a graph $G$ that is obtained from two disjoint copies of $K_{k+1}$ by adding exactly one edge joining a vertex $u$ of the first copy with a vertex $u'$ of the second copy. The cover $(X,H)$ associated with the list assignment $L$ defined by $L(u)=L(u')=[2,k+1]$ and $L(v)=[1,k]$ is an $\cO$-critical $k$-cover of $G$, and so $G$ is $k$-list-critical, but $G$ is not \DF{$(k+1,\lcn)$-critical} as $\lcn(K_{k+1})=\lcn(G)=k+1$.

In 1957 Dirac proved that every $(k+1,\cn)$-critical graph $G$ distinct from $K_{k+1}$ and with $k\geq 3$ satisfies
$$2|E(G)|\geq k|G|+k-2$$
and in 1974 he proved that equality holds if and only if $G\in \cDG(k)$. In 2002 Kostochka and Stiebitz \cite{KostS2002} proved that every $k$-list-critical graph $G$ not containing $K_{k+1}$ and with $k\geq 3$ satisfies the Dirac bound, and they asked whether equality holds if and only if $G$ belongs to $\cDG(k)$. That this is indeed the case was proved in 2018 by Bernsteyn and Kostochka \cite{BeKo18} by proving the following result.

\begin{theorem}
\label{theorem:Dirac}
Let $G$ be a simple graph that has an $\cO$-critical $k$-cover with $k\geq 3$. If $G$ does not contain $K_{k+1}$ as a subgraph, then
$$2|E(G)|\geq k|G|+k-2$$
and equality holds if and only if $G\in \cDG(k)$.
\end{theorem}

The graphs belonging to $\cDG(k)$ have another interesting feature. As observed by Stiebitz, Tuza, and Voigt \cite{StiebitzTV2013}, if $G\in \cDG(k)$ and $(X,H)$ is a $k$-cover associated with a list assignment $L$ of $G$, then $G$ has no $(\cO,(X,H))$-coloring if and only if $L\equiv [1,k]$ is the constant list assignment. Whether this also holds for arbitrary $k$-covers of $G$ seems to be unknown.

For simple graphs whose order is large, the Gallai bound beats the Dirac bound, however, only if the order is at least quadratic in $k$. Let $f_k(n)$ denote the minimum number of edges in any $(k+1,\cn)$-critical simple graph of order $n$. By K\"onig's theorem, characterizing bipartite graphs (i.e., graphs with $\cn\leq 2$), the only $(3,\cn)$-critical graphs are the odd cycles. So the function is only interesting for $k\geq 3$. For the many partial results obtained for this function the reader is referred to the paper by Kostochka and Yancey \cite{KostY12b} from 2014. Kostochka and Yancey succeeded to determine the best linear approximation for the function $f_k(n)$ with $k\geq 3$, a as consequence they obtained that
$$\lim_{n\to \infty} \frac{2f_k(n)}{n}=k+1-\frac{2}{k}$$
Let $f^\ell_k(n)$ denote the minimum number of edges in any $(k+1,\lcn)$-critical of order $n$, and
let $f^{dp}_k(n)$ denote the minimum number of edges in any $(k+1,\dcn)$-critical simple graph of order $n$.
For both functions we have the Gallai bound as well as the the Dirac bound. For the function $f^{dp}_k(n)$ this seems to be all what is known. For the function $f^\ell_k(n)$ some improvements have been made by Kostochka and Stiebitz \cite{KostStieb03} and more recently by Kierstead and Rabern \cite{KiersteadR2020}. It would be interesting to find further improvements, and to prove or disprove that $f^\ell_k(n)\geq f_k(n)$ ($n\geq k+2\geq 5$).

Given a reliable graph property $\cP$ with $d(\cP)=r$, we say that a graph $G$ is \DF{$(k+1,\cP,\cn)$-critical} if
$\cn(G-v:\cP)<\cn(G:\cP)=k+1$ for all $v\in V(G)$. Let $F_{\cP}(k,n)$ denote the minimum number of edges in any $(k+1,\cP,\cn)$-critical simple graph of order $n$. From Theorem~\ref{theorem:Gallai} it follows that
$$2F_{\cP}(k,n)\geq \left(kr+\frac{kr-2}{(kr+1)^2-3} \right)n+\frac{2kr}{(kr+1)^2-3}.$$
Until now this Gallai type bound is all what is known. One question is whether a Dirac type bound can be proved, at least for some specific properties $\cP$. Apart from the property $\cO$, the best investigated property is $\cD_1$. The class $\cD_d$ of $d$-degenerate (simple) graphs was introduced and investigated in 1970 by Lick and White \cite{LickWhite1970}. For the parameter $\cn(G:\cD_d)$ Lick and White used the term \DF{point partition number} while Bollob\'as and Manvel \cite{BollobasMa79} used the term \DF{$d$-chromatic number}. The point partition number were investigated by various researchers including Lick and White \cite{LickWhite1970},  Kronk and Mitchem \cite{KronkM75}, Mitchem \cite{Mitchem77}, Borodin \cite{Borodin76},  Bollob\'as and Manvel \cite{BollobasMa79}, and possibly others. The term $\cP$-chromatic number was introduced by Hedetniemi \cite{Hedetniemi68} in 1968. He studied, in particular, the $\cD_1$-chromatic number under the name \DF{point aboricity} and proved that any planar graph $G$ satisfies $\cn(G:\cD_1)\leq 3$. Clearly, this is a simple consequence of the fact that any planar graph $G$ is $5$-degenerate; hence we have $\dcn(G:\cD_1)\leq 3$. Note that $\CR(D_d)$ contains all connected $(d+1)$-regular graphs and so $d(\cD_d)=d+1$. This implies, in particular, that
$$2F_{\cD_1}(k,n)\geq \left(2k+\frac{2k-2}{(2k+1)^2-3} \right)n+\frac{4k}{(2k+1)^2-3}.$$
In 2002 \v{S}krekovski \cite{Skrekovski2002} proved that $2F_{\cD_1}(k,n)\geq 2kn+2k-2$, but it is not known whether $F_{\cD_1}(k,n)\geq 2f_k(n)$, provided that $n$ is large enough. The only reliable property $\cP$ for which a Dirac-type bound for the function $F_{\cP}$ is known are the properties $\cD_0=\cO$ and $\cD_1$.

Readers who are interested in additional information concerning the generalized coloring problem are referred to the survey by Albertson, Jamison, Hedetniemi, and Locke \cite{AlbertsonJHL89} and to the survey by Borowiecki and Mih\'ok \cite{BorowMihok91}

\section{Concluding remarks}

Partitioning and coloring graphs under given degree constraints is a well-established area within graph theory and has attracted a lot of attention to date. One of the earliest results in this area was obtained by Lov\'asz \cite{Lovasz66} in 1966. He proved that every simple graph $G$ with $\De(G)<d_1+d_2+ \cdots + d_p$ for $d_1, d_2, \ldots, d_p\in \nat$ has a coloring $\f$ with color set $C=[1,p]$ such that $\De(G[\fin(i)])<d_i$ for all colors $i\in C$. Partitioning of simple graphs into a fixed number of induced subgraphs with bounded degeneracy (coloring number) were first studied in the late 1970s by Borodin \cite{Borodin76} as well as by Bollob\'as and Manvel \cite{BollobasMa79}. Colorings of simple graphs under variable degeneracy constraints were first studied in 2000 by Borodin, Kostochka, and Toft  \cite{BorodinKT00}.  They investigated the following coloring problem for the class of simple graphs; for the class of graphs and hypergraphs this problem was studied by Schweser and Stiebitz \cite{SchweserS2021}.
Let $p\in \nat$ be a fixed integer, and let $(G,\vf)$ be a pair such that $G$ is a graph and $\vf=(f_1,f_2, \ldots, f_p)$ is a vector function of $G$, i.e., $f_i:V(G)\to \nat_0$. We say that $(G,\vf)$ is \DF{colorable} if there is a coloring $\f$ of $G$ with color set $C=[1,p]$ such that $G[\fin(i)]$ is strictly $f_i$-degenerate for all colors $i\in C$, for otherwise we say that $(G,\vf)$ is \DF{uncolorable}. This coloring problem has several interesting applications (see \cite{BorodinKT00}, \cite{Schweser18b} and \cite{SchweserS2021}); the two most popular applications are the following.
If $f_i\equiv 1$ for all $i\in C$, then $(G,\vf)$ is colorable if and only if $\cn(G)\leq p$. Let $L$ be a list assignment with color set $C$, and define, for a vertex $v\in V(G)$ and a color $i\in C$, $f_i(v)=1$ if $i\in L(v)$ else $f_i(v)=0$. Recall that if a subgraph $H$ of $G$ is strictly $f_i$-degenerate, then $V(H)\subseteq \su(f_i)$ implying that $i\in L(v)$ for all $v\in V(H)$. Hence $(G,\vf)$ is colorable if and only if $G$ has a \DF{proper $L$-coloring}, i.e., an $(\cO,L)$-coloring.
Hence the decision problem whether $(G,\vf)$ is colorable is {\NPC}. However, if we add a certain degree condition, this problem can be solved in polynomial time. We call $(G,f)$ \DF{degree-feasible} if every vertex $v\in V(G)$ satisfies
$$\sum_{i=1}^p f_i(v)\geq d_G(v).$$
A good characterization for uncolorable degree feasible pairs $(G,f)$ whose underlying graph $G$ is connected were obtained in \cite{BorodinKT00}, for the class of simple graphs, and in \cite{SchweserS2021}, for the class of graphs and hypergraphs. This characterization can be easily deduced from Theorem~\ref{main_theorem}. To this end, we associate to the pair
$(G,f)$ a configuration $\cK$ as follows:
the fundamental graph of $\cK$ is $G$, the cover of $\cK$ is the cover $(X,H)$ associated to the constant list assignment $L\equiv C=[1,p]$, that is, $X_v=\{v\}\times C$ for all $v\in V(G)$ and for two distinct vertices $(u,i)$ and $(v,j)$ of $H$ we have
$$\mu_H((u,i),(v,j))=
\left\{ \begin{array}{ll}
\mu_G(u,v) & \mbox{\rm if } i=j,\\
0 & \mbox{\rm if } i\not=j,
\end{array}
\right.$$
and the function of $\cK$ is the function $f$ with $f(u,i)=f _i(u)$ for $u\in V(G)$ and $i\in C$.
Then it is easy to check that $(G,f)$ is degree feasible if and only if $\cK=(G,X,H,f)$ is degree feasible; and $(G,f)$ is colorable if and only if $\cK$ is colorable. Hence Theorem~\ref{main_theorem}, respectively Proposition~\ref{prop_construtible-blocks}, yields a constructive characterization for an uncolorable degree-feasible pair $(G,\vf)$, provided that $G$ is a connected graph. This is exactly the characterization given in \cite{BorodinKT00} for simple graphs and in \cite{SchweserS2021} for graphs in general. If $(G,f)$ is an uncolorable degree-feasible pair and $G$ is a block, then it follows from Theorem~\ref{main_theorem} that $(G,f)$ satisfies one of the following three conditions:
\begin{itemize}
  \item There is an integer $j$ such that $f_j(v)=d_G(v)$ and $f_i(v)=0$ for $i\not=j$ and $v\in V(G)$.
  \item $G=tK_n$ for some integers $t,n\in \nat$ and there are integers $n_1,n_2, \ldots, n_p\in \nat_0$ such that $n_1+n_2 + \cdots +n_d=n-1$ and $\vf(v)=(tn_1,tn_2, \ldots, tn_p)$ for all $v\in V(G)$.
  \item $G=tC_n$ for some integers $t,n$, where $t\geq 1$ and $n\geq 3$ is odd, and there are two integers $k, \ell \in [1,p]$ such that
    $$f_i(v)=
\left\{ \begin{array}{ll}
t & \mbox{\rm if } i\in \{k,\ell\},\\
0 & \mbox{\rm if } i\in [1,p]\sm \{k,\ell\}
\end{array}
\right.$$
for all $v\in V(G)$.
\end{itemize}
Note that if $G$ is a block, the configuration associated to $(G,f)$ can never be an even C-configuration. Consequently, Theorem~\ref{main_theorem} is a far reaching generalization of many well known and interesting results related to ordinary colorings as well as to generalized colorings of graphs. That it is worthwhile to study these coloring problems also for graphs having multiple edges was first pointed out by Kim and Ozeki \cite{KimOz17}; they used these concepts to study colorings of signed graphs. As demonstrated by the second author in his thesis (Coloring of Graphs, Digraphs, and Hypergraphs, TU Ilmenau, 2020) the characterization of uncolorable pairs for graphs in general can be used to obtain Brooks type results for the dichromatic number and list dicromatic number of digraphs; such results were first obtained by Harutyunyan and Mohar \cite{HarutMo2012} in 2012. The decomposition result of Laslo Lov\'asz has a short and elegant proof. Moreover, it has motivated a large number of follow-up investigations in this direction. Two more recent papers about partitioning and coloring graphs with degree constrains were published by Landon Rabern, see \cite{Rabern2012b} and \cite{Rabern2013b}.

\end{document}